 \DeclareMathOperator{\diag}{diag}
\newcommand{\D}{\mathrm{d}}
\newcommand{\p}{\partial}
\newcommand{\tr}{\mathrm{tr}}
\newtheorem{theorem}{Theorem}
\newtheorem{lemma}{Lemma}
\newtheorem{corollary}{Corollary}
\newtheorem{remark}{Remark}
\newtheorem{proposition}{Proposition}
\newtheorem{definition}{Definition}
\numberwithin{equation}{section}
\begin{document}
\title{The rank 1 real Wishart spiked model I. Finite $N$ analysis}
\author{M. Y. Mo}
\date{}
\maketitle
\begin{abstract}
This is the first part of a paper that studies the phase transition
in the asymptotic limit of the rank 1 real Wishart spiked model. In
this paper, we consider $N$-dimensional real Wishart matrices $S$ in
the class $W_{\mathbb{R}}\left(\Sigma,M\right)$ in which all but one
eigenvalues of $\Sigma$ is $1$. Let the non-trivial eigenvalue of
$\Sigma$ be $1+\tau$, then as $N$, $M\rightarrow\infty$, with
$N/M=\gamma^2$ finite and non-zero, the eigenvalue distribution of
$S$ will converge into the Machenko-Pastur distribution inside a
bulk region. As $\tau$ increases from zero, one starts seeing stray
eigenvalues of $S$ outside of the support of the Machenko-Pastur
density. As the first of these stray eigenvalues leaves the bulk
region, a phase transition will occur in the largest eigenvalue
distribution of the Wishart matrix. In this paper will compute the
asymptotics of the largest eigenvalue distribution when the phase
transition occur. In the this first half of the paper, we will
establish the results that are valid for all $N$ and $M$ and will
use them to carry out the asymptotic analysis in the second half of
the paper, which will follow shortly. In particular, we have derived
a formula for the integral $\int_{O(N)}e^{-\tr(XgYg^T)}g^T\D g$ when
$X$, $Y$ are symmetric and $Y$ is a rank 1 matrix. This allows us to
write down a Fredholm determinant formula for the largest eigenvalue
distribution and analyze it using orthogonal polynomial techniques.
This approach is very different from a recent paper \cite{BB}, in
which the largest eigenvalue distribution was obtained using
stochastic operator method.
\end{abstract}
\section{Introduction}
Let $X$ be an $N\times M$ (throughout the paper, we will assume $M>
N$ and $N$ is even) matrix such that each column of $X$ is an
independent, identical $N$-variate random variable with normal
distribution and zero mean. Let $\Sigma$ be its covariance matrix,
i.e. $\Sigma_{ij}=E(X_{i1}X_{j1})$. Then $\Sigma$ is an $N\times N$
positive definite symmetric matrix and we denote its eigenvalues by
$1+\tau_j$. The matrix $S$ defined by $S=\frac{1}{M}XX^T$ is a real
Wishart matrix in the class $W_{\mathbb{R}}\left(\Sigma,M\right)$.
We can think of each column of $X$ as a draw from a $N$-variate
random variable with the normal distribution and zero mean, then $S$
is the the sample covariance matrix for the samples represented by
$X$. Real Wishart matrices are good models of sample covariance
matrices in many situations and have applications in many areas such
as finance, genetic studies and climate data. (See \cite{johnstone}
for example.)

In many of these applications, one has to deal with data in which
both $N$ and $M$ are large, while the ratio $N/M$ is finite and
non-zero. In particular, in applications to principle analysis, one
would like to study the asymptotic behavior of the largest
eigenvalue of $S$ as $N$, $M\rightarrow\infty$ with
$M/N\rightarrow\gamma^2\geq 1$ fixed.

For many statistical data, it was noted in \cite{johnstone} that in
the asymptotic limit, the eigenvalue distribution of the sample
covariance matrix will converge to a distribution whose density is
given by the Machenko-Pastur law \cite{MP} inside a bulk region (See
\cite{Bai},\cite{BS}.)
\begin{equation}\label{eq:MP}
\rho(\lambda)=\frac{\gamma}{2\pi\lambda}\sqrt{(\lambda-b_-)(b_+-\lambda)}\chi_{[b_-,b_+]},
\end{equation}
where $\chi_{[b_-,b_+]}$ is the characteristic function for the
interval $[b_-,b_+]$ and $b_{\pm}=(1\pm \gamma^{-1})^2$. However,
outside of the bulk region, there are often a finite number of large
eigenvalues at isolated locations. This behavior prompted the
introduction of the spiked model in \cite{johnstone}, which are
Wishart matrices with a covariance matrix with all but a finite
number of eigenvalues that are not equal to one. These non-trivial
eigenvalues in the covariance matrix will then be responsible for
the spikes that appear in the eigenvalue distribution of the sample
covariance matrix. The number of these non-trivial eigenvalues in
$\Sigma$ is called the rank of the spiked model.

Of particular interest is a phase transition that arises in the
largest eigenvalue distributions when the first of these spikes
starts leaving the bulk region. This phenomenon was first studied in
\cite{baik04} for the complex Wishart spiked model and then in
\cite{Wang} for the rank 1 quarternionic Wishart spiked model.
Despite having the most applications, the asymptotics for real
Wishart spiked model has not been solved until very recently
\cite{BB}. The main goal of this paper is to obtain the largest
eigenvalue distribution for the rank 1 real Wishart spiked model in
the asymptotic limit. In a recent paper \cite{BB}, the asymptotic
largest eigenvalue distribution for the rank 1 real Wishart ensemble
was obtained by using a completely different approach to ours. In
\cite{BB}, the authors first use the Housefolder algorithm to reduce
a Wishart matrix into tridiagonal form. Such tridiagonal matrix is
then treated as a discrete random Schr\"odinger operator and by
taking an appropriate scaling limit, the authors obtained a
continuous random Schr\"odinger operator on the half-line. By doing
so, the authors in \cite{BB} bypass the problem of determining the
eigenvalue j.p.d.f. for the real Wishart ensemble and obtain the
largest eigenvalue distribution in the asymptotic limit.

On the other hand, the approach presented in this paper uses
orthogonal polynomial techniques that are closer to those in
\cite{baik04} and \cite{Wang}. We will now outline our method.

One of the main difficulties in the asymptotic analysis of the real
Wishart ensembles is to find a simple expression for the j.p.d.f. of
its eigenvalues. Let $\lambda_j$ be the eigenvalues of the Wishart
matrix, then the j.p.d.f. for the real Wishart ensemble is given by
\begin{equation}\label{eq:jpdf}
P(\lambda)=\frac{1}{Z_{M,N}}|\Delta(\lambda)|\prod_{j=1}^{N}\lambda_j^{\frac{M-N-1}{2}}\int_{O(N)}
e^{-\frac{M}{2}\tr(\Sigma^{-1}gS g^{-1})}g^Tdg,
\end{equation}
where $g^TdG$ is the Haar measure on $O(N)$ and $Z_{M,N}$ is a
normalization constant. The expression of the j.p.d.f. for the
complex and quarternionic Wishart ensembles are similar. In the
complex case, the integral in the j.p.d.f. will be over the unitary
group while in the quarternionic case, the integral will be over the
symplectic group. One of the main difficulties in the asymptotic
analysis of Wishart ensembles is to evaluate the integral in
(\ref{eq:jpdf}). In the complex case, this integral can be evaluated
using the Harish-Chandra \cite{HC} (or Itzykson Zuber \cite{IZ})
formula, while in the quarternionic case, the integral can be
written as an infinite series in terms of Zonal polynomials and such
series converges to a simple function in the rank 1 case. For the
real case, however, the Harish-Chandra Itzykson Zuber formula does
not apply and while the series expression in terms of Zonal
polynomials still exists, such series expression do not seem to
converge into a simple function. In fact, our first result is that
the integral over $O(N)$ in (\ref{eq:jpdf}) is a hyper-elliptic
integral in the eigenvalues $\lambda_1,\ldots,\lambda_N$.
\begin{theorem}\label{thm:main1}
Assuming $N$ is even. Let the non-trivial eigenvalue in the
covariance matrix $\Sigma$ be $1+\tau $. Then the j.p.d.f. of the
eigenvalues in the rank 1 real Wishart spiked model with covariance
matrix $\Sigma$ is given by
\begin{equation}\label{eq:main1}
P(\lambda)=\tilde{Z}_{M,N}^{-1}
\int_{\Gamma}|\Delta(\lambda)|e^{Mt}\prod_{j=1}^Ne^{-\frac{M}{2}\lambda_j}\lambda_j^{\frac{M-N-1}{2}}\left(t-\frac{\tau
}{2(1+\tau )}\lambda_j\right)^{-\frac{1}{2}}dt,
\end{equation}
where $\Gamma$ is a contour that encloses all the points $\frac{\tau
}{2(1+\tau )}\lambda_1,\ldots,\frac{\tau }{2(1+\tau )}\lambda_N$
that is oriented in the counter-clockwise direction and
$\tilde{Z}_{M,N}$ is the normalization constant. The branch cuts of
the square root $\left(t-\frac{\tau }{2(\tau
+1)}x\right)^{-\frac{1}{2}}$ is chosen to be the line
$\arg(t-\frac{\tau }{2(\tau +1)}x)=\pi$.
\end{theorem}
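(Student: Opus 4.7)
The plan is to evaluate the $O(N)$ integral in the j.p.d.f.\ \eqref{eq:jpdf} by exploiting the fact that $\Sigma^{-1}$ differs from $I_N$ by a rank-one matrix. Writing $\Sigma^{-1} = I_N - \frac{\tau}{1+\tau} vv^T$ with $v$ the unit eigenvector corresponding to the non-trivial eigenvalue $1+\tau$, and choosing coordinates so that $S=\diag(\lambda_1,\dots,\lambda_N)$, the exponent splits as
\begin{equation*}
\tr(\Sigma^{-1} g S g^T) \;=\; \sum_j \lambda_j \;-\; \frac{\tau}{1+\tau}\, v^T g S g^T v .
\end{equation*}
The first term contributes the factor $\exp(-\tfrac{M}{2}\sum_j\lambda_j)$ that pulls out of the $O(N)$ integral, while the second depends on $g$ only through $g^T v$, which is uniformly distributed on $S^{N-1}$ when $g$ carries the Haar measure. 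Thus the $O(N)$ integral reduces to an integral over the sphere:
\begin{equation*}
\int_{S^{N-1}} \exp\!\left(\frac{M\tau}{2(1+\tau)} \sum_{j=1}^N \lambda_j\, w_j^2\right) d\sigma(w).
\end{equation*}

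I would next pass from the sphere to the simplex via $y_j = w_j^2$, which introduces a Jacobian $\prod_j y_j^{-1/2}$ and the constraint $\sum_j y_j = 1$. Writing the constraint as
\begin{equation*}
\delta\!\Bigl(1-\sum_{j=1}^N y_j\Bigr) \;=\; \frac{1}{2\pi i}\int_{c-i\infty}^{c+i\infty} e^{t(1-\sum_j y_j)}\, dt ,
\end{equation*}
valid for any $c>\frac{M\tau}{2(1+\tau)}\max_j\lambda_j$, decouples the $y_j$ integrations into independent Gamma integrals, each producing a factor $\Gamma(1/2)\bigl(t-\frac{M\tau}{2(1+\tau)}\lambda_j\bigr)^{-1/2}$. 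Rescaling $t=Ms$ then reproduces the integrand of \eqref{eq:main1} along a vertical Bromwich line in the $s$-plane.

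The final step is to deform the Bromwich line into the closed contour $\Gamma$ described in the theorem. The factor $e^{Ms}$ lets the line be closed by a large semicircle on the left with vanishing contribution. A subtle point is that although the theorem takes each factor $(s-b_j)^{-1/2}$ (with $b_j=\frac{\tau}{2(1+\tau)}\lambda_j$) to have its cut on $\arg(s-b_j)=\pi$, so that each individual cut extends to $-\infty$, the hypothesis that $N$ is even forces the monodromies of the $N$ square roots around the full set of branch points to cancel in pairs. Consequently, $\prod_j(s-b_j)^{-1/2}$ is discontinuous only on a finite union of intervals bounded by consecutive branch points, so a single closed contour encircling all $b_j$ counter-clockwise is well-defined. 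I expect the main technical step to be this careful treatment of the combined branch structure; the bookkeeping of multiplicative constants (the surface measure of $S^{N-1}$, the Jacobian, the $\Gamma(1/2)^N$, and the $M^{1-N/2}$ from the rescaling $t=Ms$) is routine and is absorbed into $\tilde Z_{M,N}$.
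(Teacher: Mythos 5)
Your proposal is correct and follows the same overall strategy as the paper: reduce the $O(N)$ integral to an integral over $S^{N-1}$, then evaluate that sphere integral by inserting a Laplace/Bromwich representation of the constraint $\sum y_j = 1$ so that the $y_j$ integrations decouple into Gamma integrals, giving the hyperelliptic integrand after $t\mapsto Mt$.

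The one place where your route is genuinely cleaner is the sphere reduction. You write $\Sigma^{-1}=I_N-\frac{\tau}{1+\tau}vv^T$, observe the integrand depends on $g$ only through $w=g^Tv$, and invoke the standard fact that $w$ is Haar-uniform on $S^{N-1}$. The paper instead spends all of its Section 2 constructing explicit angular coordinates on $SO(N)$ (fibering $SO(N)\to S^{N-1}$ with fiber $SO(N-1)$), proving invariance of a candidate measure, and first splitting $O(N)$ into its two components before noting that the two contributions agree in the rank-one case. Your one-line argument buys exactly the same reduction with much less machinery. The subsequent steps coincide almost verbatim: the paper introduces $I(\Sigma,\Lambda,t)$ via $\delta(\sum x_j^2-t)$, takes a Laplace transform in $t$, computes the resulting Gaussian in $\mathbb{R}^N$ to get $\prod_j(s-\frac{M\tau}{2(1+\tau)}\lambda_j)^{-1/2}$, and inverts; you reach the same product through Gamma integrals after the change $y_j=w_j^2$, which is an equivalent computation. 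Your paragraph on deforming the Bromwich line into a closed loop around $[0,\max_j b_j]$, using the parity of $N$ to show the product of square roots is single-valued near infinity, fills in a step the paper states without comment, and is worth having spelled out.
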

We will present two different proofs of this in the paper. The first
one is a geometric proof which involves choosing a suitable set of
coordinates on $O(N)$ and decompose the Haar measure into two parts
so that the integral in (\ref{eq:jpdf}) can be evaluated. This will
be achieved in Sections \ref{se:haar} and \ref{se:int}. The second
proof is an algebraic proof that uses the Zonal polynomial expansion
to verify the formula in Theorem \ref{thm:main1}. This proof will be
given in the Appendix where integral formulae of the form
(\ref{eq:main1}) for the complex and quarternionic Wishart ensembles
will also be derived.
\begin{remark}
The integral formula derived here is very similar to a more general
formula in \cite{BE}, in which the matrix integral over $O(N)$ is
given by
\begin{equation*}
\int_{O(N)}e^{-\tr\left(XgYg^{-1}\right)}g^T\D g\propto
\int\frac{e^{\tr(S)}}{\prod_{j=1}^{N}\det(S-y_jX)}\D S
\end{equation*}
where the integral of $S$ is over $\sqrt{-1}$ times the space of
$N\times N$ real symmetric matrices and $y_j$ are the eigenvalues of
$Y$. The measure $\D S$ is the flat Lebesgue measure on this space.
\end{remark}
From the expression of the j.p.d.f., we see that the largest
eigenvalue distribution is given by
\begin{equation}\label{eq:Pmax}
\begin{split}
&\mathbb{P}(\lambda_{max}<z)=\int_{\lambda_1\leq\ldots\leq\lambda_N\leq z}\ldots\int P(\lambda)\D\lambda_1\ldots\D\lambda_N,\\
&=\tilde{Z}_{M,N}^{-1}
\int_{\Gamma}e^{Mt}\int_{\lambda_1\leq\ldots\leq\lambda_N\leq
z}\ldots\int|\Delta(\lambda)|\prod_{j=1}^Nw(\lambda_j)\D\lambda_1\ldots\D\lambda_Ndt
\end{split}
\end{equation}
where $w(x)$ is
\begin{equation}\label{eq:w1}
w(x)=e^{-\frac{M}{2}x}x^{\frac{M-N-1}{2}}\left(t-\frac{\tau
}{2(1+\tau )}x\right)^{-\frac{1}{2}}
\end{equation}
and $\Gamma$ is chosen such that it intersects $(0,\infty)$ encloses
the interval $[0,z]$.

We can analyze the integrand as in \cite{TW}, \cite{TW1} and
\cite{TW2}. By an identity of Brujin \cite{Brujin}, we can express
the multiple integral as a Pfaffian.
\begin{equation}\label{eq:pff}
\begin{split}
&\int_{\lambda_1\leq\ldots\leq\lambda_N\leq
y}\ldots\int|\Delta(\lambda)|\prod_{j=1}^Nw(\lambda_j)\D\lambda_1\ldots\D\lambda_N\\
&=Pf\left(\left<(1-\chi_{[z,\infty)})r_j(x),(1-\chi_{[z,\infty)})r_k(y)\right>_1\right).
\end{split}
\end{equation}
where $r_j(x)$ is an arbitrary sequence of degree $j$ monic
polynomials and $\left<f,g\right>_1$ is the skew product
\begin{equation}\label{eq:skewinner}
\left<f,g\right>_1=\int_0^{\infty}\int_0^{\infty}\epsilon(x-y)f(x)g(y)w(x)w(y)\D
x\D y.
\end{equation}
where $\epsilon(x)=\frac{1}{2}\mathrm{sgn}(x)$. In defining the skew
product, the contour of integration will be defined such that if $t$
is too close to $(0,\infty)$, then the interval $(0,\infty)$ will be
deformed appropriately into the upper or lower half plane such that
the integral is well defined. Such deformation will not affect the
value of the Pfaffian as $\Gamma$ will not intersect the integration
paths on the left hand side of (\ref{eq:pff}). Then by following the
method in \cite{TW}, \cite{TW1} and \cite{TW2}, we can write the
Pffafian as the square root of a Fredholm determinant. Let
$\mathcal{M}$ be the moment matrix with entries
$\left<r_j,r_k\right>_1$, then we have
\begin{equation*}
Pf\left(\left<(1-\chi_{[z,\infty)})(x)r_j(x),(1-\chi_{[z,\infty)})(y)r_k(y)\right>_1\right)
=\sqrt{\det\mathcal{M}(t)}\sqrt{\det\left(I-K\chi_{[z,\infty)}\right)},
\end{equation*}
where $K$ is the operator whose kernel is given by
\begin{equation}
K(x,y)=\begin{pmatrix}S_1(x,y)&-\frac{\p}{\p y}S_1(x,y)\\
IS_1(x,y)&S_1(y,x)\end{pmatrix}
\end{equation}
and $S_1(x,y)$ and $IS_1(x,y)$ are the kernels
\begin{equation}\label{eq:ker}
\begin{split}
S_1(x,y)&=-\sum_{j,k=0}^{N-1}r_j(x)w(x)\mu_{jk}\epsilon(r_kw)(y),\\
IS_1(x,y)&=-\sum_{j,k=0}^{N-1}\epsilon(r_jw)(x)\mu_{jk}\epsilon(r_kw)(y)
\end{split}
\end{equation}
and $\mu_{jk}$ is the inverse of the matrix $\mathcal{M}$. As shown
in \cite{W}, the kernel can now be expressed in terms of the
Christoffel Darboux kernel of some suitable orthogonal polynomials,
together with a correction term which gives rise to a finite rank
perturbation to the Christoffel Darboux kernel. In this paper, we
introduce a new proof of this using skew orthogonal polynomials and
their representations as multi-orthogonal polynomials. By using
ideas from \cite{AF} to write skew orthogonal polynomials in terms
of orthogonal polynomials, we can express the skew orthogonal
polynomials with respect to the weight $w(x)$ in terms of a sum of
Laguerre polynomials. Let $\pi_{k,1}$ be the monic skew orthogonal
polynomials with respect to the weight $w(x)$.
\begin{equation}\label{eq:sop}
\left<\pi_{2k+1,1},y^j\right>_1=\left<\pi_{2k,1},y^j\right>_1=0,\quad
j=0,\ldots,2k-1.
\end{equation}
Then we can write these down in terms of Laguerre polynomials.
\begin{proposition}
Let $L_k$ be the monic Laguerre polynomials respect to the weight
$w_0(x)$
\begin{equation*}
\int_0^{\infty}L_k(x)L_j(x)w_0(x)\D x=\delta_{jk}h_{j,0},\quad
w_0(x)=x^{M-N}e^{-Mx}.
\end{equation*}
If $\left<L_{2k-1},L_{2k-2}\right>_1\neq 0$, then the skew
orthogonal polynomials $\pi_{2k,1}$ and $\pi_{2k+1,1}$ both exist
and $\pi_{2k,1}$ is unique while $\pi_{2k+1,1}$ is unique up to an
addition of a multiple of $\pi_{2k,1}$. Moreover, we have
$\left<L_{2k},L_{2k-1}\right>_1=0$ and the skew orthogonal
polynomials are given by
\begin{equation*}
\begin{split}
\pi_{2k,1}&=L_{2k}-\frac{\left<L_{2k},L_{2k-2}\right>_1}{\left<L_{2k-1},L_{2k-2}\right>_1}L_{2k-1},\\
\pi_{2k+1,1}&=L_{2k+1}-\frac{\left<L_{2k+1},L_{2k-2}\right>_1}{\left<L_{2k-1},L_{2k-2}\right>_1}L_{2k-1}
+\frac{\left<L_{2k+1},L_{2k-1}\right>_1}{\left<L_{2k-1},L_{2k-2}\right>_1}L_{2k-2}+c\pi_{2k,1},
\end{split}
\end{equation*}
where $c$ is an arbitrary constant.
\end{proposition}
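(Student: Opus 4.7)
The plan is to translate the skew orthogonality conditions into a linear system in the Laguerre basis and then exploit a sparsity property of the Gram-type matrix $\mathcal L_{ij}=\left<L_i,L_j\right>_1$. Since $\{L_0,\ldots,L_{2k-1}\}$ spans the polynomials of degree less than $2k$, the skew orthogonality $\left<\pi_{n,1},y^j\right>_1=0$ for $j\leq 2k-1$ is equivalent to $\left<\pi_{n,1},L_j\right>_1=0$ for the same range. Writing $\pi_{2k,1}=L_{2k}+\sum_{j<2k}a_j L_j$ and similarly for $\pi_{2k+1,1}$, the task reduces to understanding which rows of $\mathcal L$ indexed by $2k$ and $2k+1$ project nontrivially onto the subspace spanned by $L_0,\ldots,L_{2k-1}$.

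The heart of the argument is a structural lemma establishing the sparsity of $\mathcal L$. My approach would be to rewrite $\left<f,g\right>_1$ in terms of the primitive $F(y)=\int_0^y f(x)w(x)\,dx$ as
\begin{equation*}
\left<f,g\right>_1=\tfrac12 F(\infty)G(\infty)-\int_0^\infty F(y)g(y)w(y)\,dy,
\end{equation*}
and then combine this with the Laguerre derivative identities together with the factorization $w(x)^2\cdot x\bigl(t-\tfrac{\tau}{2(1+\tau)}x\bigr)=w_0(x)$, so that the primitive $F$ attached to $f=L_k$ can be re-expressed in the Laguerre basis modulo a low-rank perturbation coming from the spike factor $(t-\tfrac{\tau}{2(1+\tau)}x)^{-1/2}$. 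A degree-counting argument then forces $\mathcal L_{ij}$ to vanish outside a small band of indices around the diagonal and, in particular, yields the extra identity $\left<L_{2k},L_{2k-1}\right>_1=0$ claimed in the statement. This parallels the Adler--Forrester construction for classical skew orthogonal polynomials.

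Once the sparsity is in hand, verification becomes an elementary linear-algebra check. For $\pi_{2k,1}=L_{2k}-\alpha L_{2k-1}$ with $\alpha$ as in the proposition, the conditions $\left<\pi_{2k,1},L_j\right>_1=0$ are automatic for most $j$ by sparsity, collapse to the definition of $\alpha$ at $j=2k-2$, and hold at $j=2k-1$ precisely because $\left<L_{2k},L_{2k-1}\right>_1=0$. The analogous check for $\pi_{2k+1,1}$ gives a $2\times 2$ system for the coefficients of $L_{2k-1}$ and $L_{2k-2}$; after invoking skew-symmetry on $\left<L_{2k-2},L_{2k-1}\right>_1$ this system is solved by the displayed formulas, and the freedom $+c\pi_{2k,1}$ reflects the fact that $\pi_{2k,1}$ itself is annihilated by all the relevant functionals on the degree-$(2k-1)$ subspace.

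The hypothesis $\left<L_{2k-1},L_{2k-2}\right>_1\neq 0$ is precisely the non-vanishing denominator needed for the formulas to be defined and for the governing linear system to be non-degenerate, yielding both existence and the claimed uniqueness statements. The main obstacle is the sparsity lemma for $\mathcal L$: because $w$ carries the non-classical spike factor $(t-\tfrac{\tau}{2(1+\tau)}x)^{-1/2}$, the vanishing of the off-diagonal entries is not a table lookup and requires careful tracking of how the Laguerre derivative identities interact with this rank-one perturbation of the weight. Isolating the correct band structure and verifying the specific adjacent-pair vanishing $\left<L_{2k},L_{2k-1}\right>_1=0$ is where the real work lies.
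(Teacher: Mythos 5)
Your starting point is essentially the same as the paper's: the factorization $w(x)^2\,x\bigl(t-\tilde\tau x\bigr)=w_0(x)$ (with $\tilde\tau=\tfrac{\tau}{2(1+\tau)}$) is exactly the algebraic identity behind the paper's integration-by-parts argument. However, the key structural lemma you then invoke — that the Gram matrix $\mathcal{L}_{ij}=\left<L_i,L_j\right>_1$ is banded, so that $\left<\pi_{2k,1},L_j\right>_1=0$ is "automatic for most $j$ by sparsity" — is false here, and this is where the proposal breaks down. The band structure one sees in the Adler--Forrester framework for genuinely classical weights does not survive the spike factor $(t-\tilde\tau x)^{-1/2}$. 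Concretely: if you expand $L_m=\sum_{l\le m-2}c_lH_l+R_m$ with $R_m$ of degree at most one and $H_l(x)=\tfrac{d}{dx}\bigl(x^{l+1}(t-\tilde\tau x)w\bigr)w^{-1}$, then $\left<L_i,L_m\right>_1=\sum_l c_l\left<L_i,x^l\right>_2+\left<L_i,R_m\right>_1$, and for $i+2\le m$ the first sum is zero but the residual term $\left<L_i,R_m\right>_1$, which is a linear combination of $\left<L_i,1\right>_1$ and $\left<L_i,x\right>_1$, does not vanish. The primitive $\int_0^y L_k w$ is of the form (polynomial)$\cdot w$ only for the special polynomials $H_l$, not for a general $L_k$, so the low-degree remainder $R$ genuinely contributes and the moment matrix has full, not banded, rows and columns.

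The correct mechanism, which is what the paper actually uses, is subtler than sparsity. The identity $\left<f,H_j\right>_1=\left<f,x^j\right>_2$ converts all the skew-orthogonality conditions against $x^j$ with $j\ge 2$ into ordinary Laguerre orthogonality $\left<\cdot,x^{j-2}\right>_2=0$; since $\{1,x,H_0,\ldots,H_{2k-3}\}$ span the polynomials of degree $\le 2k-1$, this forces $\pi_{2k,1}$ to lie in $\mathrm{span}\{L_{2k},L_{2k-1},L_{2k-2}\}$ and leaves only the two residual conditions $\left<\pi_{2k,1},1\right>_1=0$ and $\left<\pi_{2k,1},x\right>_1=0$. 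The paper then replaces the basis $\{1,x\}$ by the images of $L_{2k-1},L_{2k-2}$ under the restriction map $\varrho_{2k-1}$ (invertible precisely when $\left<L_{2k-1},L_{2k-2}\right>_1\neq 0$), turning the residual system into $\left<\pi_{2k,1},L_{2k-1}\right>_1=\left<\pi_{2k,1},L_{2k-2}\right>_1=0$, which yields the displayed coefficients. So the coefficients are determined by a genuine $2\times 2$ system coming from the low-degree residuals, not by a cascade of automatic vanishings. Finally, $\left<L_{2k},L_{2k-1}\right>_1=0$ is not a band-structure fact either: the paper obtains it from the vanishing of an odd-dimensional anti-symmetric determinant, namely the existence of a nontrivial polynomial $q$ of degree $2k-2$ with $\int\tfrac{d}{dx}(qw_4)\,x^jw_4\,dx=0$ for $j\le 2k-2$, where $w_4=x^{(M-N+1)/2}(t-\tilde\tau x)^{1/2}e^{-Mx/2}$. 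You would need to add both of these ingredients — the $H_j$ change of variables and the odd anti-symmetric determinant argument — to turn your outline into a proof.
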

Next, by representing skew orthogonal polynomials as
multi-orthogonal polynomials and write them in terms of the solution
of a Riemann-Hilbert problem as in \cite{Pierce}, we can apply the
results of \cite{KDaem} and \cite{baikext} to express the kernel
$S_1(x,y)$ as a finite rank perturbation of the Christoffel Darboux
kernel of the Laguerre polynomials.
\begin{theorem}\label{thm:baik}
Let $S_1(x,y)$ defined by (\ref{eq:ker}) and choose the sequence of
monic polynomials $r_j(x)$ such that $r_j(x)$ are arbitrary degree
$j$ monic polynomials that are independent on $t$ and
$r_j(x)=\pi_{j,1}(x)$ for $j=N-2,N-1$. Then we have
\begin{equation}\label{eq:thmbaik}
\begin{split}
&S_1(x,y)-K_2(x,y)=\\
&\epsilon\left(\pi_{N+1,1}w\quad\pi_{N,1}w\right)(y)\begin{pmatrix}0&-\frac{M\tilde{\tau}}{2h_{N-1,0}}\\
-\frac{M\tilde{\tau}}{2h_{N-2,0}}&\frac{Mt-\tilde{\tau}(M+N)}{2h_{N-1,0}}\end{pmatrix}\begin{pmatrix}L_{2N-2}(x)\\
L_{2N-1}(x)\end{pmatrix}w(x)
\end{split}
\end{equation}
where $K_2(x,y)$ is the kernel of the Laguerre polynomials
\begin{equation*}
K_2(x,y)=\left(\frac{y(t-\tilde{\tau}y)}{x(t-\tilde{\tau}x)}\right)^{\frac{1}{2}}
w_0^{\frac{1}{2}}(x)w_0^{\frac{1}{2}}(y)\frac{L_{N}(x)L_{N-1}(y)-L_N(y)L_{N-1}(x)}{h_{N-1,0}(x-y)}
\end{equation*}
\end{theorem}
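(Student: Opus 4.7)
My strategy is to combine the Laguerre expansion of the skew orthogonal polynomials furnished by the preceding Proposition with the Riemann--Hilbert approach to multi-orthogonal polynomials in \cite{Pierce,KDaem,baikext}, so as to recognize $S_1(x,y)$ as a Laguerre Christoffel--Darboux kernel plus a finite-rank boundary correction.

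I would first exploit the standard fact that $S_1(x,y)$ is invariant under elementary row operations on the family $\{r_j\}$: since $\mu$ is the inverse of the moment matrix $\mathcal{M}$, replacing $r_j$ by $r_j$ plus a linear combination of lower-degree $r_k$ leaves the double sum in \eqref{eq:ker} unchanged. Hence, provided all $\pi_{j,1}$ exist, it costs nothing to set $r_j=\pi_{j,1}$ for every $j\le N-1$, not only for $j=N-2,N-1$. With this choice $\mu$ becomes block anti-diagonal with $2\times 2$ blocks determined by $s_k=\left<\pi_{2k,1},\pi_{2k+1,1}\right>_1$, and $S_1$ collapses to the familiar paired sum
\begin{equation*}
S_1(x,y)=-w(x)\sum_{k=0}^{N/2-1}\frac{1}{s_k}\bigl[\pi_{2k,1}(x)\,\epsilon(\pi_{2k+1,1}w)(y)-\pi_{2k+1,1}(x)\,\epsilon(\pi_{2k,1}w)(y)\bigr].
\end{equation*}

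Next, I would substitute the Laguerre expansions from the Proposition. Using $w(x)^2=w_0(x)/[x(t-\tilde\tau x)]$ factors out precisely the prefactor $\bigl[y(t-\tilde\tau y)/(x(t-\tilde\tau x))\bigr]^{1/2}w_0^{1/2}(x)w_0^{1/2}(y)$ that appears in $K_2$. The leading piece $\pi_{j,1}=L_j+\cdots$ reproduces the standard Laguerre Christoffel--Darboux sum $\sum_{j=0}^{N-1}L_j(x)L_j(y)/h_{j,0}$, which telescopes into the quotient form defining $K_2(x,y)$. The subleading pieces in $\pi_{2k,1}$ and $\pi_{2k+1,1}$ also telescope within the paired sum, leaving only unmatched boundary contributions at the top index $k=N/2-1$; these are the source of the rank-two correction on the right-hand side of \eqref{eq:thmbaik}.

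To pin down the $2\times 2$ coefficient matrix I would then switch to the multi-orthogonal/Riemann--Hilbert viewpoint of \cite{Pierce}: the pair $(\pi_{N-2,1},\pi_{N-1,1})$ solves a small RHP whose extra branch structure comes from $(t-\tilde\tau x)^{-1/2}$, and the Christoffel--Darboux type identities for multi-orthogonal kernels proved in \cite{KDaem,baikext} express the boundary correction in closed form in terms of a few skew inner products $\left<L_j,L_k\right>_1$ for $j,k$ near $N$. The hard part, I expect, is precisely these explicit evaluations: the factor $(t-\tilde\tau x)^{-1/2}$ must interact with the Laguerre three-term recurrence and differential relations in just the right way to collapse into the stated closed form. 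The linear-in-$t$ entry $Mt-\tilde\tau(M+N)$ strongly suggests that a single integration by parts on the $\Gamma$-contour of Theorem \ref{thm:main1} is what simultaneously produces the $Mt$ and the combination $M+N$ coming from the degree count of $L_{N-1}$ and its derivative. Everything else---the block anti-diagonal structure of $\mu$, the telescoping into the CD kernel, and the extraction of the Jacobian prefactor---is mechanical once the skew orthogonal polynomials have been replaced by their Laguerre expansions.
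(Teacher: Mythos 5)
You have correctly identified the overall architecture the paper uses---the multi-orthogonal/Riemann--Hilbert formulation of \cite{Pierce} combined with the Christoffel--Darboux formula of \cite{KDaem,baikext}, and the Laguerre expansion of the skew orthogonal polynomials---so the last paragraph of your proposal is on the right track. However, the middle step (the ``telescoping'' of the paired sum) is a gap, and your explanation of where the $Mt-\tilde\tau(M+N)$ entry comes from is wrong.

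The telescoping sketch does not go through in the way you suggest. Setting $r_j=\pi_{j,1}$ for all $j$ is legitimate (assuming all the $\pi_{j,1}$ exist, which is stronger than the hypothesis actually needed), but the paired sum you write down involves the \emph{transforms} $\epsilon(\pi_{j,1}w)(y)$, which are not polynomials in $y$. The Laguerre Christoffel--Darboux sum $\sum_j L_j(x)L_j(y)/h_{j,0}$ is a sum of products of polynomials, and there is no direct cancellation that converts $\epsilon(\pi_{2k,1}w)(y)$ into $L_{2k}(y)\,w_0(y)/w(y)$ plus lower terms. The crucial structural fact, which your argument skips, is that $\epsilon(\pi_{j,1}w)\,w=B_j(x)w_0+A_{j,1}w_1+A_{j,2}w_2$ with $B_j$ a polynomial of degree $j-2$ and $w_1,w_2$ two \emph{extra} weights; these extra weights are precisely why the problem is a genuine multi-orthogonal (rather than ordinary orthogonal) polynomial problem, and they prevent the naive telescoping. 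The paper does not telescope at all: it first proves (Proposition \ref{pro:CD}) that $S_1$ coincides with the RHP kernel $\mathcal{K}_1$ up to the conjugation by $w/w_0$, then applies Proposition \ref{pro:baik} to peel off $\mathcal{K}_0$ in one stroke.

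Your guess about the origin of the $Mt-\tilde\tau(M+N)$ entry is also off-base. Nothing in this theorem involves the $\Gamma$-contour of Theorem \ref{thm:main1}; the variable $t$ is a fixed parameter here. The correct mechanism (Lemma \ref{le:A}) is the $x$-differential identity
\begin{equation*}
L_{N-l}=\frac{d}{dx}\left(q_l(x)\,x(t-\tilde\tau x)\,w\right)w^{-1}+C_{l-2,1}\pi_{N+1,1}+C_{l-2,2}\pi_{N,1},
\end{equation*}
which is the integration-by-parts relation $\left<f,H_j\right>_1=\left<f,x^j\right>_2$ in the $x$-variable. Expanding the derivative to the two leading orders gives $\frac{M\tilde\tau}{2}q_{l,N-1}x^{N+1}+\left(-\frac{\tilde\tau}{2}(N+M)q_{l,N-1}-\frac{Mt}{2}q_{l,N-1}+\frac{M\tilde\tau}{2}q_{l,N-2}\right)x^N+O(x^{N-1})$, and matching this against the Laguerre expansions (using $L_n=x^n-\frac{(M-N+n)n}{M}x^{n-1}+\cdots$) is what produces the combination $Mt-\tilde\tau(M+N)$. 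Finally, note the stated theorem contains a typo: the column vector on the right-hand side should read $(L_{N-2}(x),L_{N-1}(x))^T$, as in the corresponding Corollary in the text, not $(L_{2N-2},L_{2N-1})^T$.
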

Note that the correction term on the right hand side of
(\ref{eq:thmbaik}) is the kernel of a finite rank operator. Its
asymptotics can be computed using the known asymptotics of the
Laguerre polynomials and the method in \cite{DG} and \cite{DGKV}.
The actual asymptotic analysis of this correction term, however, is
particularly tedious as one would need to compute the asymptotics of
the skew orthogonal polynomials up to the third leading order term
due to cancelations. To compute the contribution from the
determinant $\det\mathcal{M}$, we derive the following expression
for the logarithmic derivative of $\det\mathcal{M}$.
\begin{proposition}
Let $\mathcal{M}$ be the moment matrix with entries
$\left<r_j,r_k\right>_1$, where the sequence of monic polynomials
$r_j(x)$ is chosen such that $r_j(x)$ are arbitrary degree $j$ monic
polynomials that are independent on $t$ and $r_j(x)=\pi_{j,1}(x)$
for $j=N-2,N-1$. Then the logarithmic derivative of
$\det\mathcal{M}$ with respect to $t$ is given by
\begin{equation}
\frac{\p}{\p
t}\log\det\mathcal{M}=\int_{\mathbb{R}_+}\frac{S_1(x,x)}{t-\tilde{\tau}x}\D
x,
\end{equation}
\end{proposition}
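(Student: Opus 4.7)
The starting point is Jacobi's formula
\begin{equation*}
\frac{\p}{\p t}\log\det\mathcal{M} = \tr\!\bigl(\mathcal{M}^{-1}\tfrac{\p\mathcal{M}}{\p t}\bigr) = \sum_{j,k=0}^{N-1}\mu_{kj}\,\frac{\p \mathcal{M}_{jk}}{\p t},
\end{equation*}
with $\mu_{jk}=(\mathcal{M}^{-1})_{jk}$. The subtle feature of the statement is that the two top basis polynomials $r_{N-2}=\pi_{N-2,1}$ and $r_{N-1}=\pi_{N-1,1}$ themselves depend on $t$, so a priori $\partial_t\mathcal{M}_{jk}$ carries both weight-derivative and polynomial-derivative contributions.

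To dispense with the latter, I would use a basis-independence observation. Any replacement $r_j\mapsto r_j+\sum_{l<j}c_{jl}(t)r_l$ amounts to conjugating $\mathcal{M}$ by a unit upper-triangular matrix $U$, i.e.\ $\mathcal{M}\mapsto U\mathcal{M}U^T$, so $\det\mathcal{M}$ is unchanged. A direct matrix-form computation shows that the kernel $S_1(x,y)$, built from the quadratic form $r(x)^T\mathcal{M}^{-1}\epsilon(rw)(y)\,w(x)$, is invariant under the same change. Hence I may replace $r_{N-2},r_{N-1}$ by any $t$-independent monic polynomials of the correct degree (for instance $x^{N-2}$ and $x^{N-1}$) without altering either side of the identity, and thereafter assume the whole basis is $t$-independent.

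With all $r_j$ frozen, every $t$-dependence of $\mathcal{M}_{jk}$ resides in the weight, and from \eqref{eq:w1}
\begin{equation*}
\frac{\p w(x)}{\p t} = -\frac{w(x)}{2(t-\tilde\tau x)},\qquad \frac{\p}{\p t}\bigl(w(x)w(y)\bigr) = -\tfrac{1}{2}\!\left(\tfrac{1}{t-\tilde\tau x}+\tfrac{1}{t-\tilde\tau y}\right)w(x)w(y).
\end{equation*}
Inserting this into $\partial_t\mathcal{M}_{jk}$ produces two double integrals. Swapping $j\leftrightarrow k$ and $x\leftrightarrow y$ in the term carrying $(t-\tilde\tau y)^{-1}$, and then invoking the two antisymmetries $\epsilon(x-y)=-\epsilon(y-x)$ and $\mu_{kj}=-\mu_{jk}$, the two pieces collapse into a single one. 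Performing the inner $y$-integral to produce $\epsilon(r_kw)(x)$ and recognising the resulting double sum $\sum_{j,k}r_j(x)w(x)\mu_{jk}\epsilon(r_kw)(x)$ as $-S_1(x,x)$ yields, after the overall signs are matched, the stated identity.

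The main obstacle, in my view, is the rigorous justification of the basis-independence step in the presence of the contour-deformed definition of $\langle\cdot,\cdot\rangle_1$ given after \eqref{eq:pff}: one must check that $\partial_t$ may be moved inside the double integral uniformly in $t$ (so that the deformation of $(0,\infty)$ and the branch of $(t-\tilde\tau x)^{-1/2}$ do not produce extra boundary contributions), and that the dummy-variable swap commutes with that deformation. The skew orthogonality of $\pi_{N-2,1},\pi_{N-1,1}$ never has to be used directly in the computation; it is hidden behind the basis change. Once these analytic points are in place, the rest is bookkeeping around the antisymmetry of $\mathcal{M}$ and $\mathcal{M}^{-1}$.
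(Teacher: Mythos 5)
Your proposal is correct and takes a genuinely different, cleaner route than the paper. The paper keeps the basis exactly as in the statement (with $r_{N-2}=\pi_{N-2,1}$, $r_{N-1}=\pi_{N-1,1}$) and confronts the $t$-dependence of these two polynomials directly: it expands $\partial_t\mathcal{M}_{jk}$ into the weight-derivative piece plus the terms $\left<\partial_t\pi_{N-2,1},\pi_{N-1,1}\right>_1$ and $\left<\pi_{N-2,1},\partial_t\pi_{N-1,1}\right>_1$, then kills both by observing that $\partial_t\pi_{N-1,1}$ has degree at most $N-2$ and $\partial_t\pi_{N-2,1}$ degree at most $N-3$, so each pairing vanishes by the skew-orthogonality of $\pi_{N-2,1},\pi_{N-1,1}$ together with the antisymmetry $\left<p,p\right>_1=0$ (which handles the possible degree-$(N-2)$ component of $\partial_t\pi_{N-1,1}$ proportional to $\pi_{N-2,1}$); it then reassembles using the explicit block structure (\ref{eq:mu}) of $\mu$. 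Your basis-independence observation bypasses all of that: both $\det\mathcal{M}$ and the quadratic form $r^T(x)\mathcal{M}^{-1}\epsilon(rw)(y)$ defining $S_1$ are invariant under $r\mapsto Ur$ with $U$ unit triangular, and any two monic bases of the prescribed degrees are so related, so both sides of the identity may be evaluated in a $t$-frozen basis, whereupon only the weight carries the $t$-dependence. Your route is more economical (no need for the block form of $\mu$ or for a skew-orthogonality argument) and makes transparent that the particular choice $r_{N-2}=\pi_{N-2,1}$, $r_{N-1}=\pi_{N-1,1}$ in the proposition's hypothesis is irrelevant to this identity; the paper's route has the virtue of working directly in the basis actually used for the Christoffel--Darboux reduction. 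The one step you wave at as ``signs are matched'' deserves more care than you give it: the factor $\tfrac12$ in $\epsilon(f)(x)=\tfrac12\int_0^\infty\epsilon(x-y)f(y)\,\D y$, the factor $\tfrac12$ in $\partial_t w=-\tfrac{w}{2(t-\tilde\tau x)}$, and the antisymmetrisation each contribute a numerical factor whose product must be tracked explicitly (the paper's own algebra in this proof contains apparent slips in factors of two), so a clean constant audit is a genuine piece of the argument rather than routine bookkeeping.
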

This then allows us to express the largest eigenvalue distribution
$\mathbb{P}(\lambda_{max}<z)$ as an integral of Fredholm
determinant.
\begin{theorem}\label{thm:main2}
The largest eigenvalue distribution of the rank 1 real Wishart
ensemble can be written in the following integral form.
\begin{equation}\label{eq:Pmaxdet}
\begin{split}
\mathbb{P}(\lambda_{max}<z)=
C\int_{\Gamma}\exp\left(Mt+\int_{c_0}^t\int_{\mathbb{R}_+}\frac{S_1(x,x)}{s-\tilde{\tau}x}\D
x\D s\right)\sqrt{\det\left(I-K\chi_{[z,\infty)}\right)}\D t.
\end{split}
\end{equation}
for some constant $c_0$ and $K$ is the operator with kernel given by
(\ref{eq:ker}). The integration contour $\Gamma$ is a close contour
that encloses the interval $[0,z]$ in the anti-clockwise direction.
\end{theorem}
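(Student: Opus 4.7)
The plan is to assemble the three ingredients already set up in the excerpt: the contour integral representation (\ref{eq:Pmax}), the Pfaffian identity (\ref{eq:pff}) together with the Tracy--Widom reduction to a Fredholm determinant, and the differential identity for $\log\det\mathcal{M}$ stated in the proposition immediately preceding the theorem.

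First, I would start from (\ref{eq:Pmax}), which already writes $\mathbb{P}(\lambda_{\max}<z)=\tilde{Z}_{M,N}^{-1}\int_\Gamma e^{Mt}I(t,z)\,\D t$ with $I(t,z)$ the $N$-fold integral over $\lambda_1\le\cdots\le\lambda_N\le z$. By the Brujin identity (\ref{eq:pff}), $I(t,z)$ equals the Pfaffian of the truncated skew products, and by the Tracy--Widom type identity quoted just after (\ref{eq:pff}),
$$I(t,z)=\sqrt{\det\mathcal{M}(t)}\sqrt{\det(I-K\chi_{[z,\infty)})},$$
where $\mathcal{M}(t)$ is the full moment matrix with entries $\langle r_j,r_k\rangle_1$ and $K$ is the $2\times 2$ block operator whose kernel is built from $S_1$ as in (\ref{eq:ker}).

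Second, I would use the proposition to convert $\sqrt{\det\mathcal{M}(t)}$ into an explicit exponential. Integrating
$$\frac{\p}{\p t}\log\det\mathcal{M}(t)=\int_{\mathbb{R}_+}\frac{S_1(x,x)}{t-\tilde{\tau}x}\,\D x$$
from a convenient base point $c_0$ and exponentiating gives
$$\sqrt{\det\mathcal{M}(t)}=\sqrt{\det\mathcal{M}(c_0)}\exp\!\left(\tfrac{1}{2}\int_{c_0}^t\int_{\mathbb{R}_+}\frac{S_1(x,x)}{s-\tilde{\tau}x}\,\D x\,\D s\right).$$
Plugging this back into the factorization of $I(t,z)$ and absorbing $\tilde{Z}_{M,N}^{-1}\sqrt{\det\mathcal{M}(c_0)}$ together with the overall $1/2$ into the constant $C$ produces (\ref{eq:Pmaxdet}).

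The main obstacle is the analytic book-keeping along $\Gamma$. The weight $w(x;t)$, the skew inner product and the kernel $K$ all depend on $t$ through the branched factor $(t-\tilde{\tau}x)^{-1/2}$ whose branch points lie at $\tilde{\tau}\lambda_j$. One must choose $c_0$ off the cuts so that $\int_{c_0}^t\int_{\mathbb{R}_+}S_1(x,x)/(s-\tilde{\tau}x)\,\D x\,\D s$ is an unambiguous single-valued analytic function of $t$ in a neighbourhood of $\Gamma$, and track a consistent branch of $\sqrt{\det\mathcal{M}(t)}$ along $\Gamma$. One also has to justify Fubini to interchange $\int_\Gamma(\cdot)\,\D t$ with the $N$-fold Lebesgue integral over $[0,z]^N$; this is where the deformation of the $(0,\infty)$ integration path into the upper or lower half-plane, described after (\ref{eq:skewinner}), is essential, since it makes every individual skew inner product entering the Pfaffian well-defined and jointly continuous in $t$. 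Finally, one must verify that $t\mapsto\det(I-K\chi_{[z,\infty)})$ is analytic, which follows from the explicit rational/analytic dependence of $S_1$ on $t$ visible in Theorem~\ref{thm:baik}. Once these regularity issues are in place, the desired representation (\ref{eq:Pmaxdet}) follows by direct substitution.
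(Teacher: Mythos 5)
Your overall strategy is exactly the one the paper intends: assemble the contour-integral expression for $\mathbb{P}(\lambda_{\max}<z)$ from (\ref{eq:Pmax}), apply the Brujin/Tracy--Widom reduction to write the inner $N$-fold integral as $\sqrt{\det\mathcal{M}(t)}\,\sqrt{\det(I-K\chi_{[z,\infty)})}$, and then use Proposition~\ref{pro:derpar} to exponentiate $\sqrt{\det\mathcal{M}(t)}$. The remarks on choosing $c_0$ off the cuts, tracking a consistent branch of $\sqrt{\det\mathcal{M}(t)}$ along $\Gamma$, and justifying Fubini via the contour deformations described after (\ref{eq:skewinner}) are all sensible, and they match what the paper implicitly relies on.

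However, there is a genuine slip in the final step. From $\partial_t\log\det\mathcal{M}=\int_{\mathbb{R}_+}S_1(x,x)/(t-\tilde\tau x)\,\D x$ you correctly derive $\sqrt{\det\mathcal{M}(t)}=\sqrt{\det\mathcal{M}(c_0)}\exp\bigl(\tfrac12\int_{c_0}^t\int_{\mathbb{R}_+}S_1(x,x)/(s-\tilde\tau x)\,\D x\,\D s\bigr)$, but you then claim the ``overall $1/2$'' can be absorbed into $C$. It cannot: that $1/2$ multiplies a $t$-dependent quantity \emph{inside} the exponential, not the whole integrand, so it is not a prefactor. Followed honestly, your derivation gives $\mathbb{P}(\lambda_{\max}<z)=C\int_\Gamma\exp\bigl(Mt+\tfrac12\int_{c_0}^t\int_{\mathbb{R}_+}S_1(x,x)/(s-\tilde\tau x)\,\D x\,\D s\bigr)\sqrt{\det(I-K\chi_{[z,\infty)})}\,\D t$, which differs from the stated (\ref{eq:Pmaxdet}) by the $\tfrac12$ in the exponent. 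Either (\ref{eq:Pmaxdet}) should carry this $\tfrac12$, or Proposition~\ref{pro:derpar} should be read as computing $\partial_t\log\sqrt{\det\mathcal{M}}$; you should say explicitly which, rather than claim the factor can be swept into $C$.
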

In the asymptotic limit, we will be able to evaluate the $t$
integral in (\ref{eq:Pmaxdet}) using steepest descent analysis. We
shall see that the phase transition occurs when the saddle point in
$t$ is such that the singularity $t/\tilde{\tau}$ in the weight
$w(x)$ lies within a distance of order $N^{-\frac{2}{3}}$ to the end
point $b_+$ in (\ref{eq:MP}). In this case, the factor
$(t-\tilde{\tau}x)^{-\frac{1}{2}}$ in the weight $w(x)$ will
significantly alter the behavior of the correction term in
(\ref{eq:thmbaik}) and gives us a phase transition in the largest
eigenvalue distribution.

In this first part of the paper, we shall carry out the analysis
when $N$ and $M$ are finite and establish the results that are
needed in the asymptotic analysis. Throughout the paper, we shall
assume that $N$ is even and that $M-N>0$.

\section{Haar measure on $SO(N)$}\label{se:haar}
In this section, we will find a convenient set of coordinate on
$O(N)$ to evaluate the integral
\begin{equation*}
\int_{O(N)} e^{-\frac{M}{2}\tr(\Sigma^{-1}gS
g^{-1})}g^Tdg
\end{equation*}
that appears in the expression of the j.p.d.f. (\ref{eq:jpdf}). As
both $\Sigma^{-1}$ and $S$ are symmetric matrices, they can be
diagonalized by matrices in $O(N)$. We can therefore replace both
$\Sigma^{-1}$ and $S$ by the diagonal matrices $\Sigma_d^{-1}$ and
$\Lambda_d$.
\begin{equation*}
\begin{split}
\Sigma_d^{-1}&=\diag\left(\frac{1}{1+\tau_1},\ldots,\frac{1}{1+\tau _N}\right),\\
\Lambda_d&=\diag\left(\lambda_1,\ldots,\lambda_N\right)
\end{split}
\end{equation*}
The group $O(N)$ has two connected components, $SO(N)$ and $O_-(N)$
that consists of orthogonal matrices that have determinant $1$ and
$-1$ respectively. Let $T$ be the matrix
\begin{equation*}
T=\begin{pmatrix}0&1&0\\
1&0&0\\
0&0&I_{N-2}\end{pmatrix},
\end{equation*}
then the left multiplication by $T$ defines an diffeomorphism from
$O_-(N)$ to $SO(N)$. In particular, we can write the integral over
$O(N)$ in (\ref{eq:jpdf}) as
\begin{equation*}
\begin{split}
I(\Sigma,\Lambda)&=\int_{O(N)}e^{-\frac{M}{2}\tr(\Sigma_d^{-1}g\Lambda_d
g^{-1})}g^Tdg,\\&=
\int_{SO(N)}e^{-\frac{M}{2}\tr(\Sigma_d^{-1}g\Lambda_d
g^{-1})}g^Tdg+\int_{O_-(N)}e^{-\frac{M}{2}\tr(\Sigma_d^{-1}g\Lambda_d
g^{-1})}g^Tdg\\
&=\int_{SO(N)}e^{-\frac{M}{2}\tr(\Sigma_d^{-1}g\Lambda_d
g^{-1})}g^Tdg+\int_{SO(N)}e^{-\frac{M}{2}\tr(\Sigma_d^{-1}Tg\Lambda_d
g^{-1}T^{-1})}g^Tdg\\
&=\int_{SO(N)}e^{-\frac{M}{2}\tr(\Sigma_d^{-1}g\Lambda_d
g^{-1})}g^Tdg+\int_{SO(N)}e^{-\frac{M}{2}\tr(\tilde{\Sigma}_d^{-1}g\Lambda_d
g^{-1})}g^Tdg,
\end{split}
\end{equation*}
where $\tilde{\Sigma}_d$ is the diagonal matrix with the first two
entries of $\Sigma_d$ swapped.
\begin{equation*}
\tilde{\Sigma}_d^{-1}=\diag\left(\frac{1}{1+\tau _2},\frac{1}{1+\tau
_1}\ldots,\frac{1}{1+\tau _N}\right).
\end{equation*}
Note that $g^Tdg$ is also the Haar measure on $SO(N)$.

As we are considering the rank 1 spiked model, we let $\tau
_1=\ldots=\tau _{N-1}=0$ and $\tau _N=\tau $. Therefore
$\tilde{\Sigma}_d=\Sigma_d$ and we have
\begin{equation}\label{eq:I}
\begin{split}
I(\Sigma,\Lambda)
&=2\int_{SO(N)}e^{-\frac{M}{2}\tr(\Sigma_d^{-1}g\Lambda_d
g^{-1})}g^Tdg
\end{split}
\end{equation}
Let $g_{ij}$ be the entries of $g\in SO(N)$. Then the integral $I$
can be written as
\begin{equation*}
\begin{split}
I(\Sigma,\Lambda)&=2\int_{SO(N)}e^{-\frac{M}{2}\tr(\Sigma_d^{-1}g\Lambda_d
g^{-1})}g^Tdg,\\
&=2\int_{SO(N)}e^{-\frac{M}{2}\tr(\left(\Sigma_d^{-1}-I_N\right)g\Lambda_d
g^{-1})}e^{-\frac{M}{2}\tr(g\Lambda_d g^{-1})}g^Tdg,\\
&=2\prod_{j=1}^Ne^{-\frac{M}{2}\lambda_j}\int_{SO(N)}e^{\frac{\tau M}{2(1+\tau )}\sum_{j=1}^N\lambda_jg_{jN}^2}g^Tdg,\\
\end{split}
\end{equation*}
We will now find an expression of the Haar measure that allows us to
compute the integral $I(\Sigma,\Lambda)$.

First we will define a set of coordinates on $SO(N)$ that is
convenient for our purpose. We will then express the Haar measure on
$SO(N)$ in terms of these coordinates.

An element $g\in SO(n)$ can be written in the following form
\begin{equation*}
g=\left(\vec{g}_1,\ldots,\vec{g}_n\right),\quad |\vec{g}_i|=1,\quad
\vec{g}_i\cdot\vec{g}_j=\delta_{ij},\quad i,j=1,\ldots, n.
\end{equation*}
This represents $SO(N)$ as the set of orthonormal frames in
$\mathbb{R}^N$ with positive orientation whose coordinate axis are
given by the vectors $\vec{g}_i$. As the vector $\vec{g}_N$ is a
unit vector, we can write its components as
\begin{equation}\label{eq:angles}
\begin{split}
g_{1N}&=\cos\phi_1,\quad
g_{jN}=\prod_{k=1}^{j-1}\sin\phi_k\cos\phi_j,\quad
j=2,\ldots,n-1,\\
g_{NN}&=\prod_{k=1}^{N-1}\sin\phi_k
\end{split}
\end{equation}
The remaining vectors $\vec{g}_1,\ldots,\vec{g}_{N-1}$ form an
orthonormal frame with positive orientation in a copy of
$\mathbb{R}^{N-1}$ that is orthogonal to $\vec{g}_N$. Therefore the
set of vectors $\vec{g}_1,\ldots,\vec{g}_{N-1}$ can be identified
with $SO(N-1)$. To be precise, let $\vec{u}$ be a unit vector in
$\mathbb{R}^N$ and let $G(\vec{u})\in SO(N)$ be a matrix that maps
$\vec{u}$ to the vector $\left(0,\ldots,0,1\right)^T$. Then since
$G$ is orthogonal, we have
\begin{equation}\label{eq:Gvecn}
G(\vec{g}_N)\vec{g}_j=\left(v_{j1},\ldots,v_{j,N-1},0\right)^T,\quad
j<N
\end{equation}
In particular, the matrix $V$ whose entries are given by $v_{ij}$
for $1\leq i,j\leq N-1$ is in $SO(N-1)$. A set of coordinates on
$SO(N)$ can therefore be given by
\begin{equation}\label{eq:coord}
g=\left(\vec{g}_N,V\right).
\end{equation}
In the above equation, $\vec{g}_N$ is identified with the
coordinates $\phi_{j}$ in (\ref{eq:angles}), while the matrix $V$
identify with the coordinates on $SO(N-1)$ that correspond to $V$.
In terms of these coordinates, the left action of an element $S\in
SO(N)$ on $g$ is given by the following.
\begin{equation*}
\begin{split}
Sg&=\left(S\vec{g}_1,\ldots,S\vec{g}_{N-1},S\vec{g}_N\right)^T \\
&=\left(SG(\vec{g}_N)^{-1}\vec{v}_1,\ldots,SG(\vec{g}_N)^{-1}\vec{v}_{N-1},S\vec{g}_N\right)^T
\end{split}
\end{equation*}
Then as in (\ref{eq:Gvecn}), we have
\begin{equation*}
G(S\vec{g}_N)SG(\vec{g}_N)^{-1}\vec{v}_j=\left(\tilde{v}_{j1},\ldots,\tilde{v}_{j,N-1},0\right)^T.
\end{equation*}
The matrix $\tilde{V}$ whose entries are given by $\tilde{v}_{ij}$
are again in $SO(N-1)$, therefore the matrix
$G(S\vec{g}_N)SG(\vec{g}_N)^{-1}$ is of the form
\begin{equation}\label{eq:action}
G(S\vec{g}_N)SG(\vec{g}_N)^{-1}=\begin{pmatrix}\tilde{S}_{N-1}&\vec{s}\\
                                               0&s_N\end{pmatrix}
\end{equation}
From the fact that $G(S\vec{g}_N)SG(\vec{g}_N)^{-1}$ is an
orthogonal matrix, it is easy to check that $\vec{s}=0$ and $s_N=\pm
1$. To determine $s_N$, let us consider the action of
$G(S\vec{g}_N)SG(\vec{g}_N)^{-1}$ on $(0,0,\ldots,1)^T$. We have
\begin{equation*}
G(S\vec{g}_N)SG(\vec{g}_N)^{-1}(0,0,\ldots,1)^T=G(S\vec{g}_N)S\vec{g}_N=(0,0,\ldots,1)^T
\end{equation*}
Therefore $s_N=1$ and $\tilde{S}_{N-1}$ is in $SO(N-1)$. The action
of $S$ on $g$ is therefore given by
\begin{equation}\label{eq:Saction}
Sg=\left(S\vec{g}_N,\tilde{S}_{N-1}V\right).
\end{equation}
We will now write the Haar measure on $SO(N)$ in terms the
coordinates (\ref{eq:coord}). These coordinates give a local
diffeomorphism between $SO(N)$ and $S^{N-1}\times SO(N-1)$ as
$\vec{g}_N\in S^{N-1}$ and $V\in SO(N-1)$. Let $dX$ be a measure on
$S^{N-1}$ that is invariant under the action of $SO(N)$ and $V^TdV$
be the Haar measure on $SO(N-1)$, then the following measure
\begin{equation*}
dH=dX\wedge V^TdV,
\end{equation*}
is invariant under the left action of $SO(N)$. Let $S\in SO(N)$,
then its action on the point $(\vec{g}_N,V)$ is given by
(\ref{eq:Saction}), where $\tilde{S}_{N-1}$ depends only on the
coordinates $\phi_1,\ldots,\phi_{N-1}$. Therefore under the action
of $S$, the measure $dH$ becomes
\begin{equation}\label{eq:dehaar}
dH\rightarrow dX\wedge
V^T\tilde{S}_{N-1}^T\tilde{S}_{N-1}dV=dX\wedge V^TdV,
\end{equation}
as $dX$ is invariant under the action of $S$. Therefore if we can
find a measure on $S^{N-1}$ that is invariant under the action of
$SO(N)$, then $dX\wedge V^TdV$ will give us a left invariant measure
on $SO(N)$. Since the left invariant measure on a compact group is
also right invariant, this will give us the Haar measure on $SO(N)$.
As the metric on $S^{N-1}$ is invariant under the action of $SO(N)$,
it is clear that the volume form on $S^{N-1}$ is invariant under the
action of $SO(N)$. Let $dX$ be the volume form on $S^{N-1}$, then
from (\ref{eq:dehaar}), we see that the measure $dX\wedge V^TdV$ is
invariant under the action of $SO(N)$.
\begin{proposition}\label{pro:haar}
Let $dX$ be the volume form on $S^{N-1}$ given by
\begin{equation*}
\begin{split}
dX=\sin^{N-2}(\phi_1)\sin^{N-1}(\phi_2)\ldots\sin(\phi_{N-2})\wedge_{j=1}^{N-1}d\phi_j
\end{split}
\end{equation*}
in terms of the coordinates $\phi_1,\ldots,\phi_{N-1}$ in
(\ref{eq:angles}) and (\ref{eq:coord}), then the Haar measure on
$SO(N)$ is equal to a constant multiple of
\begin{equation*}
dH=dX\wedge V^TdV,
\end{equation*}
where $V^TdV$ is the Haar measure on $SO(N-1)$ in terms of the
coordinates (\ref{eq:coord}).
\end{proposition}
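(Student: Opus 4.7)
The plan is to combine the facts that the author has already established into a clean conclusion. The lead-up to the proposition shows (i) that the map $g \mapsto (\vec{g}_N, V)$ in (\ref{eq:coord}) identifies an open dense subset of $SO(N)$ with an open dense subset of $S^{N-1} \times SO(N-1)$; (ii) that the volume form $dX$ on $S^{N-1}$ induced by the round metric is invariant under the natural $SO(N)$-action; and (iii) that under the left action of $S \in SO(N)$ on $g$, the pair $(\vec{g}_N, V)$ transforms as $(S\vec{g}_N, \tilde{S}_{N-1} V)$, where $\tilde{S}_{N-1} \in SO(N-1)$ depends only on $\vec{g}_N$ via (\ref{eq:action}). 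Taken together, as in (\ref{eq:dehaar}), these facts show that $dH = dX \wedge V^T dV$ is left-invariant on $SO(N)$.

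With left-invariance in hand, I would invoke the uniqueness up to a positive multiplicative constant of left Haar measure on the compact Lie group $SO(N)$, together with the fact that on a compact group any left Haar measure is automatically right-invariant. This immediately yields that $dH$ agrees with the Haar measure on $SO(N)$ up to an overall constant.

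The only piece of the proposition not already contained in the surrounding discussion is the explicit coordinate expression for $dX$. I would derive it by pulling back the Euclidean metric on $\mathbb{R}^N$ along the spherical embedding (\ref{eq:angles}). Because the parametrization has a nested product structure, the tangent vectors $\partial_{\phi_i} \vec{g}_N$ turn out to be pairwise orthogonal, with squared norms $1,\ \sin^2 \phi_1,\ \sin^2 \phi_1 \sin^2 \phi_2,\ \ldots,\ \sin^2 \phi_1 \sin^2 \phi_2 \cdots \sin^2 \phi_{N-2}$. The square root of the determinant of this diagonal metric then gives the density $\sin^{N-2}(\phi_1)\sin^{N-3}(\phi_2)\cdots\sin(\phi_{N-2})$ against $d\phi_1 \wedge \cdots \wedge d\phi_{N-1}$, matching the stated formula (with the standard decreasing sequence of exponents $N-2, N-3, \ldots, 1$).

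The main obstacle is pure bookkeeping: verifying the orthogonality of $\partial_{\phi_i} \vec{g}_N$ for $i \neq j$ and computing their norms via the trigonometric identities implicit in (\ref{eq:angles}). This is routine but index-heavy, and one should also note that the chart $(\phi_1,\ldots,\phi_{N-1},V)$ misses a closed set of measure zero in $SO(N)$, which does not affect the identification of measures. No genuine conceptual difficulty remains, since all of the invariance analysis is already in place.
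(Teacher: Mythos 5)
Your proposal is correct and follows essentially the same route as the paper: the left-invariance of $dX \wedge V^T dV$ is established exactly as in the discussion leading up to the proposition (using the transformation law $(\vec g_N, V) \mapsto (S\vec g_N, \tilde S_{N-1} V)$ and the $SO(N)$-invariance of the round volume form on $S^{N-1}$), and uniqueness of Haar measure on the compact group then gives the result up to an overall constant. The paper does not include a separate proof block for the proposition, treating the preceding paragraph as the argument, so your write-up in fact adds the one piece the paper leaves implicit, namely the computation of the coordinate expression for $dX$ from the spherical parametrization (\ref{eq:angles}). You also correctly flag that the exponent on $\sin(\phi_2)$ should be $N-3$ rather than the $N-1$ printed in the proposition; this is consistent with the polar-coordinate Jacobian $r^{N-1}\sin^{N-2}\phi_1\cdots\sin\phi_{N-2}$ that the paper itself writes down in Section~\ref{se:int}, so the displayed formula in the proposition is a typo and your version is the intended one.
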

We can now compute the integral $I(\Sigma,\Lambda)$.

\section{An integral formula for the j.p.d.f.}\label{se:int}
By using the expression of the Haar measure derived in the last
section, we can now write the integral $I(\Sigma,\Lambda)$ as
\begin{equation*}
\begin{split}
I(\Sigma,\Lambda)&=2\prod_{j=1}^Ne^{-\frac{M}{2}\lambda_j}\int_{SO(N)}e^{\frac{\tau M}{2(1+\tau )}\sum_{j=1}^N\lambda_jg_{jN}^2}g^Tdg,\\
&=2\prod_{j=1}^Ne^{-\frac{M}{2}\lambda_j}\int_{SO(N-1)}V^TdV\int_{S^{N-1}}e^{\frac{\tau M}{2(1+\tau )}\sum_{j=1}^N\lambda_jg_{jN}^2}dX,\\
&=2C\prod_{j=1}^Ne^{-\frac{M}{2}\lambda_j}\int_{S^{N-1}}e^{\frac{\tau
M}{2(1+\tau )}\sum_{j=1}^N\lambda_jg_{jN}^2}dX,
\end{split}
\end{equation*}
for some constant $C$, where the $N-1$ sphere $S^{N-1}$ in the above
formula is defined by $\sum_{j=1}^Ng_{jN}^2=1$ and $dX$ is the
volume form on it. If we let $g_{jN}=x_j$, then the above can be
written as
\begin{equation}\label{eq:Iint}
\begin{split}
I(\Sigma,\Lambda)&=2C\prod_{j=1}^Ne^{-\frac{M}{2}\lambda_j}\int_{\mathbb{R}^N}e^{\frac{\tau
M}{2(1+\tau )}\sum_{j=1}^N\lambda_jx_{j}^2}
\delta\left(\sum_{j=1}^Nx_j^2-1\right)dx_1\ldots dx_N.
\end{split}
\end{equation}
This can be seen most easily by the use of polar coordinates in
$\mathbb{R}^N$, which are given by
\begin{equation*}
\begin{split}
x_{1}&=r\cos\phi_1,\quad
x_{j}=r\prod_{k=1}^{j-1}\sin\phi_k\cos\phi_j,\quad
j=2,\ldots,N-1,\\
x_{N}&=r\prod_{k=1}^{N-1}\sin\phi_k,
\end{split}
\end{equation*}
Then the volume form in $\mathbb{R}^N$ is given by
\begin{equation*}
dx_1\ldots
dx_N=r^{N-1}\sin^{N-2}\phi_1\ldots\sin\phi_{N-2}drd\phi_1\ldots
d\phi_{N-1}
\end{equation*}
Therefore in terms of polar coordinates, we have
\begin{equation*}
\begin{split}
&\int_{\mathbb{R}^N}e^{\frac{\tau M}{2(1+\tau
)}\sum_{j=1}^N\lambda_jx_{j}^2}
\delta\left(\sum_{j=1}^Nx_j^2-1\right)dx_1\ldots dx_N\\
&=\int_{0}^{\pi}d\phi_1\int_0^{2\pi}d\phi_2\ldots\int_0^{2\pi}d\phi_{N-1}
\int_{0}^{\infty}dr\delta\left(\sum_{j=1}^Nr^2-1\right)r^{N-1}\\
&\times e^{\frac{\tau M}{2(1+\tau )}\sum_{j=1}^N\lambda_jx_{j}^2}
\sin^{N-2}\phi_1\ldots\sin\phi_{N-2}\\
&=\int_{S^{N-1}}e^{\frac{\tau M}{2(1+\tau
)}\sum_{j=1}^N\lambda_jx_{j}^2} dX.
\end{split}
\end{equation*}
To compute the integral $I(\Sigma,\Lambda)$, we use a method in the
studies of random pure quantum systems \cite{Maj}. The idea is to
consider the Laplace transform of the function $I(\Sigma,\Lambda,t)$
defined by
\begin{equation*}
I(\Sigma,\Lambda,t)=2C\prod_{j=1}^Ne^{-\frac{M}{2}\lambda_j}\int_{\mathbb{R}^N}e^{\frac{\tau
M}{2(1+\tau )}\sum_{j=1}^N\lambda_jx_{j}^2}
\delta\left(\sum_{j=1}^Nx_j^2-t\right)dx_1\ldots dx_N,
\end{equation*}
then $I(\Sigma,\Lambda,1)=I(\Sigma,\Lambda)$. The Laplace transform
of $I(\Sigma,\Lambda,t)$ in the variable $t$ is given by
\begin{equation*}
\int_0^{\infty}e^{-st}I(\Sigma,\Lambda,t)dt=
2C\prod_{j=1}^Ne^{-\frac{M}{2}\lambda_j}\int_{\mathbb{R}^N}e^{\sum_{j=1}^N\left(-s+\frac{\tau
M}{2(1+\tau )}\lambda_j\right)x_{j}^2} dx_1\ldots dx_N
\end{equation*}
Then, provided $\mathrm{Re}(s)>\max_j\left(\lambda_j\right)$, the
integral can be computed explicitly to obtain
\begin{equation*}
\int_0^{\infty}e^{-st}I(\Sigma,\Lambda,t)dt=
2C\prod_{j=1}^Ne^{-\frac{M}{2}\lambda_j}\left(s-\frac{\tau
M}{2(1+\tau )}\lambda_j\right)^{-\frac{1}{2}}
\end{equation*}
Taking the inverse Laplace transform, we obtain an integral
expression for $I(\Sigma,\Lambda)$.
\begin{equation*}
I(\Sigma,\Lambda)= \frac{C}{\pi i}
\int_{\Gamma}e^s\prod_{j=1}^Ne^{-\frac{M}{2}\lambda_j}\left(s-\frac{\tau
M}{2(1+\tau )}\lambda_j\right)^{-\frac{1}{2}}ds,
\end{equation*}
where $\Gamma$ is a contour that encloses all the points $\frac{\tau
M}{2(1+\tau )}\lambda_1,\ldots,\frac{\tau M}{2(1+\tau )}\lambda_N$
that is oriented in the counter-clockwise direction. Rescaling the
variable $s$ to $s=Mt$, we obtain
\begin{equation*}
I(\Sigma,\Lambda)= \frac{M^{1-\frac{N}{2}}C}{\pi i}
\int_{\Gamma}e^{Mt}\prod_{j=1}^Ne^{-\frac{M}{2}\lambda_j}\left(t-\frac{\tau
}{2(1+\tau )}\lambda_j\right)^{-\frac{1}{2}}dt,
\end{equation*}
This then give us an integral expression for the j.p.d.f.
\begin{theorem}\label{thm:jpdf}
Let the non-trivial eigenvalue in the covariance matrix $\Sigma$ be
$1+\tau $. Then the j.p.d.f. of the eigenvalues in the rank 1 real
Wishart spiked model with covariance matrix $\Sigma$ is given by
\begin{equation}\label{eq:dist}
P(\lambda)=\tilde{Z}_{M,N}^{-1}
\int_{\Gamma}|\Delta(\lambda)|e^{Mt}\prod_{j=1}^Ne^{-\frac{M}{2}\lambda_j}\lambda_j^{\frac{M-N-1}{2}}\left(t-\frac{\tau
}{2(1+\tau )}\lambda_j\right)^{-\frac{1}{2}}dt,
\end{equation}
where $\Gamma$ is a contour that encloses all the points $\frac{\tau
}{2(1+\tau )}\lambda_1,\ldots,\frac{\tau }{2(1+\tau )}\lambda_N$
that is oriented in the counter-clockwise direction and
$\tilde{Z}_{M,N}$ is the normalization constant. The branch cuts of
the square root $\left(a-\frac{\tau }{2(\tau
+1)}x\right)^{-\frac{1}{2}}$ are chosen to be the line
$\arg(a-\frac{\tau }{2(\tau +1)}x)=\pi$.
\end{theorem}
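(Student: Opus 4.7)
The plan is to reduce the orthogonal group integral in (\ref{eq:jpdf}) to a single Laplace-type contour integral by exploiting the rank 1 structure of $\Sigma^{-1}-I_N$. First I would start from the standard real Wishart j.p.d.f.\ (\ref{eq:jpdf}) and note that everything outside the $O(N)$ integral already matches the right-hand side of (\ref{eq:dist}) up to the factor $\int_\Gamma e^{Mt}\prod_j(t-\tfrac{\tau}{2(1+\tau)}\lambda_j)^{-1/2}dt$. Consequently, it suffices to prove that $I(\Sigma,\Lambda)$ equals this contour integral (times an $M,N$-dependent constant that can be absorbed into $\tilde Z_{M,N}$).

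Next I would split $O(N)=SO(N)\sqcup O_-(N)$ and use left multiplication by the swap matrix $T$ (which permutes the first two basis vectors) to rewrite the $O_-(N)$ piece as an $SO(N)$ integral against $\tilde\Sigma_d^{-1}$. In the rank 1 case, $\tau_1=\cdots=\tau_{N-1}=0$, so swapping the first two diagonal entries gives $\tilde\Sigma_d=\Sigma_d$, which means the two halves coincide and $I(\Sigma,\Lambda)$ is simply twice the $SO(N)$ integral. Then, since $\Sigma_d^{-1}-I_N$ has only one non-zero entry (in the $(N,N)$ slot), $\tr((\Sigma_d^{-1}-I_N)g\Lambda_d g^{-1})$ depends solely on the last column $\vec g_N$, and the integrand reduces to $e^{\frac{\tau M}{2(1+\tau)}\sum_j \lambda_j g_{jN}^2}$ times $\prod_j e^{-M\lambda_j/2}$. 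Applying the Haar decomposition of Proposition \ref{pro:haar} ($dH=dX\wedge V^TdV$ on $S^{N-1}\times SO(N-1)$), the $SO(N-1)$ factor integrates to a harmless constant and we are left with an integral of the exponential over $S^{N-1}$.

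The heart of the argument, which I expect to be the main obstacle (at least in terms of the analytic bookkeeping), is evaluating the remaining sphere integral $\int_{S^{N-1}}\exp(\tfrac{\tau M}{2(1+\tau)}\sum_j\lambda_j x_j^2)\,dX$. My plan here is the Laplace-transform trick borrowed from random pure state studies: introduce a parameter $t$ by replacing the sphere constraint with $\delta(\sum x_j^2-t)$, so the sphere integral equals the value at $t=1$ of a function $I(\Sigma,\Lambda,t)$, and compute its Laplace transform in $t$. For $\mathrm{Re}(s)>\max_j \tfrac{\tau M}{2(1+\tau)}\lambda_j$ the Laplace transform decouples into $N$ Gaussian integrals in $\mathbb{R}$, each yielding a factor $(s-\tfrac{\tau M}{2(1+\tau)}\lambda_j)^{-1/2}$. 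Inverting the Laplace transform by a Bromwich contour, and then deforming it into a closed contour $\Gamma$ encircling all the branch points (permissible because the integrand decays and is analytic outside these points), produces the desired contour integral.

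Finally, rescaling $s=Mt$ gives the prefactor $M^{1-N/2}$ and puts the branch points at $\tfrac{\tau}{2(1+\tau)}\lambda_j$, matching (\ref{eq:dist}) exactly. The choice of branch cuts $\arg(t-\tfrac{\tau}{2(1+\tau)}x)=\pi$ is forced by the Laplace inversion, since on the original Bromwich contour each $(s-\tfrac{\tau M}{2(1+\tau)}\lambda_j)^{-1/2}$ is the principal branch with cut extending to the left. All constants independent of $\lambda$ and $t$ are absorbed into $\tilde Z_{M,N}^{-1}$, and after multiplying by $|\Delta(\lambda)|\prod_j\lambda_j^{(M-N-1)/2}$ from (\ref{eq:jpdf}) the claimed formula follows.
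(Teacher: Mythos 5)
Your proposal matches the paper's own proof essentially step for step: the $O(N)=SO(N)\sqcup O_-(N)$ split with the swap matrix $T$, the reduction to an $S^{N-1}$ integral via the Haar decomposition $dX\wedge V^TdV$ of Proposition~\ref{pro:haar}, the Laplace-transform trick (introducing $\delta(\sum x_j^2-t)$, computing the transform as a product of Gaussian integrals, and inverting via a Bromwich contour deformed to a closed $\Gamma$), and the final rescaling $s=Mt$. This is the same argument given in Sections~\ref{se:haar} and~\ref{se:int}, including the borrowing of the delta-function/Laplace device from the random pure state literature.
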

For the purpose of computing the largest eigenvalue distribution
$\mathbb{P}\left(\lambda_{max}\leq z\right)$, we can assume that the
eigenvalues are all smaller than or equal to a constant $z$.
\section{Skew orthogonal polynomials}
As explain in the introduction, we need to find the skew orthogonal
polynomials with the weight (\ref{eq:w1}). Let us denote
$\frac{\tau}{2(\tau+1)}$ by $\tilde{\tau}$ and consider the skew
orthogonal polynomials with respect to the weight
\begin{equation}\label{eq:weight}
\begin{split}
 w(x)&=e^{-\frac{Mx}{2}}x^{\frac{M-N-1}{2}}(t-\tilde{\tau
}x)^{-\frac{1}{2}}.
\end{split}
\end{equation}
We shall use the ideas in \cite{AF} to express the skew orthogonal
polynomials in terms of a linear combinations of Laguerre
polynomials.

Let $H_j(x)$ to be the degree $j+2$ polynomial
\begin{equation*}
H_j(x)=\frac{d}{dx}\left(x^{j+1}(t-\tilde{\tau}x)w(x)\right)w^{-1}(x),\quad
j\geq 0.
\end{equation*}
Then as we assume $M-N>0$, it is easy to see that
\begin{equation}\label{eq:parts}
\left<f(x),H_j(y)\right>_1=\left<f(x),x^j\right>_2,
\end{equation}
for any $f(x)$ such that $\int_0^{\infty}f(x)w(x)\D x$ is finite,
where the product $\left<\right>_2$ is defined by
\begin{equation}\label{eq:w0}
\left<f(x)g(x)\right>_2=\int_{0}^{\infty}f(x)g(x)w_0(x)dx,\quad
w_0(x)=x^{M-N}e^{-Mx}.
\end{equation}
Note that $w_0(x)$ is not the square of $w(x)$. The fact that
$w_0(x)$ is the weight for the Laguerre polynomials allows us to
express the skew orthogonal polynomials for the weight
(\ref{eq:weight}) in terms of Laguerre polynomials.

In particular, this implies that the conditions (\ref{eq:sop}) is
equivalent to the following conditions
\begin{equation}\label{eq:sop2}
\begin{split}
\left<\pi_{2k,1},y^j\right>_1&=0,\quad j=0,1,\\
\left<\pi_{2k,1},y^j\right>_2&=0,\quad j=0,\ldots,2k-3.
\end{split}
\end{equation}
and the exactly same conditions for $\pi_{2k+1,1}(x)$. In
particular, the second condition implies the skew orthogonal
polynomials can be written as
\begin{equation*}
\begin{split}
\pi_{2k,1}(x)&=L_{2k}(x)+\gamma_{2k,1}L_{2k-1}(x)+\gamma_{2k,2}L_{2k-2}(x),\\
\pi_{2k+1,1}(x)&=L_{2k+1}(x)+\gamma_{2k+1,0}L_{2k}(x)+\gamma_{2k+1,1}L_{2k-1}(x)+\gamma_{2k+1,2}L_{2k-2}(x),
\end{split}
\end{equation*}
where $L_j(x)$ are the degree $j$ monic Laguerre polynomials that
are orthogonal with respect to the weight $w_0(x)$.
\begin{equation}\label{eq:laguerre}
\begin{split}
L_{n}(x)&=\frac{(-1)^ne^{Mx}x^{-M+N}}{M^n}\frac{d^n}{dx^n}\left(e^{-Mx}x^{n+M-N}\right),\\
&=x^n-\frac{(M-N+n)n}{M}x^{n-1}+O(x^{n-2}).
\end{split}
\end{equation}
The constants $\gamma_{k,j}$ are to be determined from the first
condition in (\ref{eq:sop2}). We will now show that if
$\left<L_{2k-1},L_{2k-2}\right>_1\neq 0$, then the skew orthogonal
polynomials $\pi_{2k,1}$ and $\pi_{2k+1,1}$ exist and that
$\pi_{2k,1}$ is unique. First let us show that the first condition
in (\ref{eq:sop2}) is equivalent to
\begin{equation*}
\begin{split}
\left<\pi_{2k,1},L_{2k-j}\right>_1&=0,\quad j=1,2.
\end{split}
\end{equation*}
To do this, we will first define a map $\varrho_N$ from the span of
$L_{2k-1}$ and $L_{2k-2}$ to the span of $y$ and $1$.

Let $P(x)$ be a polynomial of degree $m$. Then we can write the
polynomial $P(x)$ as
\begin{equation}\label{eq:PRq}
P(x)=\frac{d}{dx}\left(q(x)x(t-\tilde{\tau}
x)w(x)\right)w^{-1}(x)+R(x)
\end{equation}
where $q(x)$ is a polynomial of degree $m-2$ and $R(x)$ is a
polynomial of degree less than or equal to 1. By writing down the
system of linear equations satisfied by the coefficients of $q(x)$
and $R(x)$, we see that the polynomials $q(x)$ and $R(x)$ are
uniquely defined for any given $P(x)$. In particular, the map
$f:P(x)\mapsto R(x)$ is a well-defined linear map from the space of
polynomial to the space of polynomials of degrees less than or equal
to $1$. Let $\varrho_{k}$ be the following restriction of this map.
\begin{definition}\label{de:fn}
For any polynomial $P(x)$, let $f$ be the map that maps $P(x)$ to
$R(x)$ in (\ref{eq:PRq}). Then the map $\varrho_{k}$ is the
restriction of $f$ to the linear subspace spanned by the orthogonal
polynomials $L_{k},L_{k-1}$.
\end{definition}
We then have the following.
\begin{lemma}\label{le:linear}
If $\left<L_{k},L_{k-1}\right>_1\neq 0$, then the map $\varrho_{k}$
is invertible.
\end{lemma}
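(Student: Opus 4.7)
The plan is to prove invertibility by showing injectivity, which suffices because both the domain $\mathrm{span}\{L_k, L_{k-1}\}$ and the codomain (polynomials of degree $\leq 1$) are two-dimensional. So I will suppose $P = \alpha L_k + \beta L_{k-1}$ lies in the kernel of $\varrho_k$ and deduce $\alpha = \beta = 0$.

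The vanishing $\varrho_k(P) = 0$ means, by Definition \ref{de:fn}, that $P(x) = \frac{d}{dx}\bigl(q(x) x (t - \tilde{\tau} x) w(x)\bigr) w^{-1}(x)$ for some polynomial $q$. A quick degree count in (\ref{eq:PRq}) (matching leading coefficients) shows $\deg q \leq k - 2$. Writing $q(x) = \sum_{j=0}^{k-2} c_j x^j$ and recalling the definition of $H_j(x)$ immediately before (\ref{eq:parts}), this becomes
\begin{equation*}
P(x) = \sum_{j=0}^{k-2} c_j H_j(x).
\end{equation*}

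The key step is then to compute $\langle L_k, P\rangle_1$ and $\langle L_{k-1}, P\rangle_1$ in two different ways. On one hand, expanding each $H_j$ via (\ref{eq:parts}) gives $\langle L_i, P\rangle_1 = \sum_{j=0}^{k-2} c_j \langle L_i, x^j\rangle_2$, and since the $L_i$ are Laguerre polynomials orthogonal to all lower-degree polynomials with respect to $\langle\cdot,\cdot\rangle_2$, every term vanishes for $i \in \{k-1, k\}$ because $j \leq k-2 < i$. On the other hand, expanding $P$ in the basis $\{L_k, L_{k-1}\}$ and using the antisymmetry of the skew product (so $\langle L_i, L_i\rangle_1 = 0$) yields
\begin{equation*}
0 = \langle L_k, P\rangle_1 = \beta\,\langle L_k, L_{k-1}\rangle_1, \qquad 0 = \langle L_{k-1}, P\rangle_1 = -\alpha\,\langle L_k, L_{k-1}\rangle_1.
\end{equation*}
The hypothesis $\langle L_k, L_{k-1}\rangle_1 \neq 0$ then forces $\alpha = \beta = 0$, giving $P = 0$.

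The argument is essentially mechanical once one has the pairing identity (\ref{eq:parts}) in hand, and I do not anticipate a genuine obstacle. The only point that requires care is the degree bookkeeping in the paragraph above: one must verify that $\deg q \leq k - 2$ so that the summation index $j$ stays strictly below $k-1$, which is precisely what makes the Laguerre orthogonality apply to \emph{both} $L_k$ and $L_{k-1}$ simultaneously. Once that is confirmed, the conclusion is immediate.
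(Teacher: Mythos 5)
Your proof is correct and is essentially the same argument as the paper's: you take a kernel element $P=\alpha L_k+\beta L_{k-1}$, observe that $\varrho_k(P)=0$ forces $P=\frac{d}{dx}\bigl(q\,x(t-\tilde{\tau}x)w\bigr)w^{-1}$ with $\deg q\le k-2$, use the integration-by-parts identity (\ref{eq:parts}) to convert $\langle L_i,P\rangle_1$ into $\langle L_i,q\rangle_2=0$ for $i\in\{k-1,k\}$, and then exploit the antisymmetry of $\langle\cdot,\cdot\rangle_1$ together with the hypothesis $\langle L_k,L_{k-1}\rangle_1\ne 0$ to conclude $\alpha=\beta=0$. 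The only cosmetic difference is that you spell out the expansion $q=\sum_{j\le k-2}c_jx^j$ and sum over the $H_j$ before invoking (\ref{eq:parts}), whereas the paper applies $\langle q,L_k\rangle_2=0$ directly.
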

\begin{proof} Suppose there is exists non-zero constants $a_1$ and
$a_2$ such that
\begin{equation*}
a_1L_{k}+a_2L_{k-1}=\frac{d}{dx}(q(x)x(t-\tilde{\tau} x)w)w^{-1}
\end{equation*}
for some polynomial $q(x)$ of degree $k-2$, then by taking the skew
product $\left<\right>_1$ of this polynomial with $L_{k}$, we obtain
\begin{equation*}
a_2\left<L_{k-1},L_{k}\right>_1=
\left<a_1L_{k}+a_2L_{k-1},L_{k}\right>_1=\left<q(x),L_{k}\right>_2=0.
\end{equation*}
As $q(x)$ is of degree $k-2$. Since
$\left<L_{k-1},L_{k}\right>_1\neq 0$, this shows that $a_2=0$. By
taking the skew product with $L_{k-1}$, we conclude that $a_1=0$ and
hence the map $\varrho_{k}$ has a trivial kernel.
\end{proof}
In particular, we have the following.
\begin{corollary}\label{cor:linear}
If $k$ is even, then $\left<L_{k},L_{k-1}\right>_1=0$.
\end{corollary}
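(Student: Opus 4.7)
The plan is to prove the contrapositive of Lemma~\ref{le:linear}: I will show that when $k$ is even, the map $\varrho_k$ fails to be invertible, which by that lemma forces $\left<L_k,L_{k-1}\right>_1 = 0$. The key ingredient will be an antisymmetry property of the operator
$D(q)(x) := \frac{d}{dx}\bigl(q(x)\,x(t-\tilde{\tau}x)\,w(x)\bigr)\,w(x)^{-1}$
(whose image is precisely the kernel of the map $f$ of Definition~\ref{de:fn}, and whose values on monomials are the $H_j$ of (\ref{eq:parts})) with respect to the Laguerre pairing $\left<\cdot,\cdot\right>_2$.

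To establish this antisymmetry I would exploit the identity $w_0(x) = x(t-\tilde{\tau}x)\,w(x)^2$, which is immediate from the definitions (\ref{eq:weight}) and (\ref{eq:w0}). Rewriting $\int_0^\infty D(q)\,f\,w_0\,dx$ as $\int_0^\infty \bigl(q\,x(t-\tilde{\tau}x)w\bigr)'\cdot f\,x(t-\tilde{\tau}x)w\,dx$ and integrating by parts once, the boundary terms vanish by the same argument used in deriving (\ref{eq:parts}) (the assumption $M-N>0$ controls the behavior at $x=0$, and the exponential decay of $w$ handles $x=\infty$), producing $\left<D(q),f\right>_2 = -\left<q,D(f)\right>_2$.

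With this in hand, consider the matrix $B$ with entries $B_{ij} = \left<L_i,D(L_j)\right>_2$ for $0\leq i,j\leq k-2$; the antisymmetry of $D$ combined with the symmetry of $\left<\cdot,\cdot\right>_2$ makes $B$ an antisymmetric matrix of size $(k-1)\times(k-1)$. When $k$ is even, $k-1$ is odd, so $B$ is automatically singular. Picking a nonzero $\vec{a}=(a_0,\ldots,a_{k-2})\in\ker B$ and setting $q=\sum_{j=0}^{k-2} a_j L_j$, a nonzero polynomial of degree at most $k-2$, the kernel relations $(B\vec{a})_i = \left<L_i,D(q)\right>_2 = 0$ for all $i\leq k-2$ together with $\deg D(q)\leq k$ force the Laguerre expansion of $D(q)$ to live in $\mathrm{span}(L_k,L_{k-1})$. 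Injectivity of $D$ (if $D(q)=0$ then $q\cdot x(t-\tilde{\tau}x)w$ is a constant, which must be zero from the boundary behavior at $x=0$, and hence $q\equiv 0$) guarantees $D(q)\neq 0$. Therefore $\mathrm{span}(L_k,L_{k-1})$ meets $\mathrm{Im}\,D$ nontrivially, so $\varrho_k$ has a nonzero kernel and Lemma~\ref{le:linear} gives the Corollary.

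I expect the antisymmetry of $D$ with respect to $\left<\cdot,\cdot\right>_2$ to be the main conceptual point; it is precisely the tuning $w_0 = x(t-\tilde{\tau}x)w^2$ that makes this work, and once it is in place the ``odd-sized antisymmetric matrices are singular'' observation finishes the argument essentially for free. The only delicate technical point is the boundary-term check in the integration by parts, but this uses nothing beyond the decay estimates already invoked for (\ref{eq:parts}).
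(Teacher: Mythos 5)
Your argument is correct and is essentially the paper's: both reduce to showing that $\varrho_k$ has nontrivial kernel by observing that the moment matrix of the operator $D$ with respect to $\left<\cdot,\cdot\right>_2$ (you use the Laguerre basis; the paper uses monomials together with $w_4 = x(t-\tilde{\tau}x)w$, which satisfies $w_4^2 = w_0$) is antisymmetric and of odd size $k-1$ when $k$ is even, hence singular. The only difference is that you spell out the integration by parts underlying the antisymmetry and the injectivity of $D$ needed to conclude $D(q)\neq 0$, points the paper leaves implicit.
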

\begin{proof}
Let $q(x)$ be a polynomial of degree $k-2$ that satisfies the
following conditions
\begin{equation}\label{eq:qcon}
\int_{\mathbb{R}_+}\frac{d}{dx}(q(x)w_4(x))x^jw_4(x)\D x=0, \quad
j=0,\ldots, k-2,
\end{equation}
where
$w_4(x)=x^{\frac{M-N+1}{2}}(t-\tilde{\tau}x)^{\frac{1}{2}}e^{-\frac{Mx}{2}}$.
A non trivial polynomial $q(x)$ of degree $k-2$ that satisfies these
conditions exists if and only if the moment matrix with entries
$\int_{\mathbb{R}_+}\frac{d}{dx}(x^iw_4(x))x^jw_4(x)\D x$ has a
vanishing determinant. For even $k$, the moment matrix is of odd
dimension and anti-symmetric and hence its determinant is always
zero.

Assuming $k$ is even and let $q(x)$ be a polynomial that satisfies
(\ref{eq:qcon}). By taking the inner product $\left<\right>_2$ with
$x^j$, we see that there exists non-zero constants $a_1$ and $a_2$
such that
\begin{equation*}
a_1L_{k}+a_2L_{k-1}=\frac{d}{dx}(q(x)x(t-\tilde{\tau} x)w)w^{-1},
\end{equation*}
Therefore by Lemma \ref{le:linear}, we see that if $k$ is even, we
will have $\left<L_{k},L_{k-1}\right>_1=0$.
\end{proof}
Lemma \ref{le:linear} shows that if
$\left<L_{i},L_{i-1}\right>_1\neq 0$, then there exists two
independent polynomials $R_0(y)$ and $R_1(y)$ in the span of $y$ and
$1$ such that $R_j(y)=\varrho_{i}(L_{i-j})$. Then we have
\begin{equation*}
R_j(y)=-\frac{d}{dy}(q_j(y)y(t-\tilde{\tau}y)w)w^{-1}+L_{i-j}(y),\quad
j=0,1.
\end{equation*}
In particular, the skew product of $R_j(y)$ with $L_{i-l}$, $l<2$ is
given by
\begin{equation*}
\left<L_{i-l}(x),R_j(y)\right>_1=-\left<L_{i-l}q_j\right>_2+\left<L_{i-l},L_{i-j}\right>_1.
\end{equation*}
As $q_j$ is a polynomial of degree less than or equal to $i-2$ and
$l<2$, the first term on the right hand side is zero. Therefore we
have
\begin{equation}\label{eq:prod}
\left<L_{i-l}(x),R_j(y)\right>_1=\left<L_{i-l},L_{i-j}\right>_1,\quad
l< 2,\quad j=0,1.
\end{equation}
We can now show that the skew orthogonal polynomials $\pi_{2k,1}$
and $\pi_{2k+1,1}$ exist if $\left<L_{2k-1},L_{2k-2}\right>_1\neq
0$.
\begin{proposition}\label{pro:p2N}
If $\left<L_{2k-1},L_{2k-2}\right>_1\neq 0$, then the skew
orthogonal polynomials $\pi_{2k,1}$ and $\pi_{2k+1,1}$ both exist
and $\pi_{2k,1}$ is unique while $\pi_{2k+1,1}$ is unique up to an
addition of a multiple of $\pi_{2k,1}$. Moreover, we have
$\left<L_{2k},L_{2k-1}\right>_1=0$ and the skew orthogonal
polynomials are given by
\begin{equation}\label{eq:p2N}
\begin{split}
\pi_{2k,1}&=L_{2k}-\frac{\left<L_{2k},L_{2k-2}\right>_1}{\left<L_{2k-1},L_{2k-2}\right>_1}L_{2k-1},\\
\pi_{2k+1,1}&=L_{2k+1}-\frac{\left<L_{2k+1},L_{2k-2}\right>_1}{\left<L_{2k-1},L_{2k-2}\right>_1}L_{2k-1}
+\frac{\left<L_{2k+1},L_{2k-1}\right>_1}{\left<L_{2k-1},L_{2k-2}\right>_1}L_{2k-2}+c\pi_{2k,1},
\end{split}
\end{equation}
for $k\geq 2$, where $c$ is an arbitrary constant.
\end{proposition}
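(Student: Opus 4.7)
The plan is to impose the skew-orthogonality conditions on the ansatz already dictated by the second line of (\ref{eq:sop2}) and to solve a small triangular linear system in the coefficients $\gamma_{\ast,\ast}$ whose shape is controlled by Corollary \ref{cor:linear}.

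First, I would replace the testing against $\{1, y\}$ in the first line of (\ref{eq:sop2}) by an equivalent testing against $\{L_{2k-1}, L_{2k-2}\}$. The hypothesis $\left<L_{2k-1}, L_{2k-2}\right>_1 \neq 0$ makes $\varrho_{2k-1}$ invertible by Lemma \ref{le:linear}, so the polynomials $R_j := \varrho_{2k-1}(L_{2k-1-j})$ for $j=0,1$ form a basis of $\mathrm{span}\{1, y\}$. I would then invoke identity (\ref{eq:prod}) with $i = 2k-1$ and $l \in \{-2, -1, 0, 1\}$ (all admissible since $l < 2$) to obtain $\left<L_m, R_j\right>_1 = \left<L_m, L_{2k-1-j}\right>_1$ for every $m \in \{2k-2, 2k-1, 2k, 2k+1\}$ that can appear in either ansatz. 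The first line of (\ref{eq:sop2}) is thereby equivalent to the pair of conditions $\left<\pi, L_{2k-1}\right>_1 = \left<\pi, L_{2k-2}\right>_1 = 0$.

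Next, I would substitute the ansatz $\pi_{2k,1} = L_{2k} + \gamma_{2k,1} L_{2k-1} + \gamma_{2k,2} L_{2k-2}$ and simplify using the skew-symmetry $\left<f, f\right>_1 = 0$ together with the key vanishing $\left<L_{2k}, L_{2k-1}\right>_1 = 0$ furnished by Corollary \ref{cor:linear} at the even index $2k$ (which, incidentally, is the assertion $\left<L_{2k}, L_{2k-1}\right>_1=0$ of the proposition). Testing against $L_{2k-1}$ then collapses to $\gamma_{2k,2}\left<L_{2k-2}, L_{2k-1}\right>_1 = 0$, forcing $\gamma_{2k,2} = 0$, and testing against $L_{2k-2}$ determines $\gamma_{2k,1}$ uniquely as the stated ratio, yielding the formula for $\pi_{2k,1}$. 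I would repeat the procedure for $\pi_{2k+1,1} = L_{2k+1} + \gamma_{2k+1,0} L_{2k} + \gamma_{2k+1,1} L_{2k-1} + \gamma_{2k+1,2} L_{2k-2}$; the two equations now involve three unknowns, the first pins down $\gamma_{2k+1,2}$, and the second expresses $\gamma_{2k+1,1}$ as an affine function of the free parameter $\gamma_{2k+1,0} =: c$. Regrouping the $c$-dependent terms in the resulting expansion reproduces exactly $c\,\pi_{2k,1}$, which matches (\ref{eq:p2N}).

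The main subtlety lies in the interplay between Corollary \ref{cor:linear} and the one-parameter freedom. Without the vanishing $\left<L_{2k}, L_{2k-1}\right>_1 = 0$, the $2\times 2$ system for $(\gamma_{2k,1}, \gamma_{2k,2})$ would not force $\gamma_{2k,2}=0$, and the $2\times 3$ system for the coefficients of $\pi_{2k+1,1}$ would have a free direction different from $\pi_{2k,1}$. It is precisely this even-index degeneracy that both produces uniqueness of $\pi_{2k,1}$ and makes the residual freedom in $\pi_{2k+1,1}$ a multiple of $\pi_{2k,1}$ rather than of some independent polynomial; checking this alignment is the only real bookkeeping in the proof, and amounts to the coefficient match sketched above.
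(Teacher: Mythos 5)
Your proposal is correct and takes essentially the same approach as the paper: both reduce the conditions $\left<\pi,y^j\right>_1=0$, $j=0,1$, to testing against $L_{2k-1},L_{2k-2}$ via the invertibility of $\varrho_{2k-1}$ and equation (\ref{eq:prod}), then solve the resulting linear system using $\left<L_{2k},L_{2k-1}\right>_1=0$ from Corollary \ref{cor:linear}. The only cosmetic difference is that you carry the $L_{2k}$-coefficient $\gamma_{2k+1,0}=c$ explicitly through the $2\times 3$ system, whereas the paper sets it to zero in the ansatz and observes afterward that adding $c\,\pi_{2k,1}$ preserves the skew-orthogonality conditions.
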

\begin{proof} Let $\pi_{2k,1}$ and $\pi_{2k+1,1}$ be polynomials
defined by
\begin{equation*}
\begin{split}
\pi_{2k,1}(x)&=L_{2k}(x)+\gamma_{2k,1}L_{2k-1}(x)+\gamma_{2k,2}L_{2k-2}(x),\\
\pi_{2k+1,1}(x)&=L_{2k+1}(x)+\gamma_{2k+1,1}L_{2k-1}(x)+\gamma_{2k+1,2}L_{2k-2}(x),
\end{split}
\end{equation*}
for some constants $\gamma_{j,k}$. If we can show that
$\left<\pi_{2k-l,1},y^j\right>_1=0$ for $j=0,1$ and $l=-1,0$, then
$\pi_{2k-l,1}$ will be the skew orthogonal polynomial. Let $R_0$ and
$R_1$ be the images of $L_{2k-1}$ and $L_{2k-2}$ under the map
$\varrho_{2k-1}$. Then by the assumption in the Proposition, they
are independent in the span of $y$ and $1$. Therefore the conditions
$\left<\pi_{2k-l,1},y^j\right>_1=0$ are equivalent to
$\left<\pi_{2k-l,1},R_j(y)\right>_1=0$. By taking $i=2k-1$ in
(\ref{eq:prod}), we see that this is equivalent to
$\left<\pi_{2k-l,1},L_{2k-1-j}\right>_1=0$. This implies
\begin{equation*}
\begin{split}
\left<\pi_{2k,1},L_{2k-1}\right>_1&=\left<L_{2k},L_{2k-1}\right>_1+\gamma_{2k,2}\left<L_{2k-2},L_{2k-1}\right>_1=0,\\
\left<\pi_{2k,1},L_{2k-2}\right>_1&=\left<L_{2k},L_{2k-2}\right>_1+\gamma_{2k,1}\left<L_{2k-1},L_{2k-2}\right>_1=0.
\end{split}
\end{equation*}
Hence we have
\begin{equation*}
\gamma_{2k,1}=-\frac{\left<L_{2k},L_{2k-2}\right>_1}{\left<L_{2k-1},L_{2k-2}\right>_1},\quad
\gamma_{2k,2}=\frac{\left<L_{2k},L_{2k-1}\right>_1}{\left<L_{2k-1},L_{2k-2}\right>_1},
\end{equation*}
which exist and are unique as $\left<L_{2k-1},L_{2k-2}\right>_1\neq
0$. This determines $\pi_{2k,1}$ uniquely. By Corollary
\ref{cor:linear}, we have $\left<L_{2k},L_{2k-1}\right>_1=0$ and
hence $\gamma_{2k,2}=0$. Similarly, the coefficients for
$\pi_{2k+1,1}$ are
\begin{equation*}
\gamma_{2k+1,1}=-\frac{\left<L_{2k+1},L_{2k-2}\right>_1}{\left<L_{2k-1},L_{2k-2}\right>_1},\quad
\gamma_{2k+1,2}=\frac{\left<L_{2k+1},L_{2k-1}\right>_1}{\left<L_{2k-1},L_{2k-2}\right>_1}.
\end{equation*}
Again, these coefficients exist and are unique. However, as
$\left<\pi_{2k,1},\pi_{2k,1}\right>_1=0$ and
$\left<\pi_{2k,1},y^j\right>_1=0$ for $j=0,\ldots,2k-1$, adding any
multiple of $\pi_{2k,1}$ to $\pi_{2k+1,1}$ will not change the
orthogonality conditions $\left<\pi_{2k+1,1},y^j\right>_1=0$ that is
satisfied by $\pi_{2k+1,1}$ and hence $\pi_{2k+1,1}$ is only
determined up to the addition of a multiple of $\pi_{2k,1}$.
\end{proof}
\section{The Christoffel Darboux formula for the kernel}
In \cite{Pierce}, skew orthogonal polynomials were interpreted as
multi-orthogonal polynomials and represented as the solution of a
Riemann-Hilbert problem. This representation allows us to us the
results in \cite{KDaem} to derive a Christoffel-Darboux formula for
the kernel (\ref{eq:ker}) in terms of the Riemann-Hilbert problem.

Let us recall the definitions of multi-orthogonal polynomials. First
let the weights $w_0$, $w_1$ and $w_2$ be
\begin{equation*}
w_0(x)=x^{M-N}e^{-Mx},\quad
w_l(x)=w(x)\int_{\mathbb{R}_+}\epsilon(x-y)L_{N-l-2}(y)w(y)\D
y,\quad l=1,2.
\end{equation*}
Note that the weights $w_l(x)$ are defined with the polynomials
$L_{N-3}$ and $L_{N-4}$ instead of $L_{N-1}$ and $L_{N-2}$. This is
because the construction below involves the polynomial $\pi_{N-2,1}$
as well as $\pi_{N,1}$. By taking $i=N-3$ in (\ref{eq:prod}), we see
that the orthogonality conditions for $\pi_{N,1}$ is also equivalent
to
\begin{equation*}
\begin{split}
\left<\pi_{N,1},x^j\right>_2=0,\quad j=0,\ldots,N-3,\\
\left<\pi_{N,1},L_{N-j}\right>_2=0,\quad j=3,4,
\end{split}
\end{equation*}
provided $\left<L_{N-3},L_{N-4}\right>_1$ is also non-zero.

Then the orthognoal polynomials of type II $P^{II}_{N,l}(x)$,
\cite{Ap1}, \cite{Ap2}, \cite{BK1}, \cite{vanGerKuij} are
polynomials of degree $N-1$ such that
\begin{equation}\label{eq:mop1}
\begin{split}
\int_{\mathbb{R}_+}P^{II}_{N,l}(x)x^jw_0(x)\D x&=0,\quad 0\leq j\leq N-3,\\
\int_{\mathbb{R}_+}P^{II}_{N,l}(x)w_m(x)\D x&=-2\pi
i\delta_{lm},\quad l,m=1,2.
\end{split}
\end{equation}
\begin{remark} More accurately, these are in fact the
multi-orthogonal polynomials with indices $(N-2,\vec{e}_l)$, where
$\vec{e}_1=(0,1)$ and $\vec{e}_2=(1,0)$.
\end{remark}
We will now define the multi-orthogonal polynomials of type I. Let
$P^{I}_{N,l}(x)$ be a function of the following form
\begin{equation}\label{eq:typeI}
P^{I}_{N,l}(x)=B_{N,l}(x)w_0+\sum_{k=1}^2\left(\delta_{kl}x+A_{N,k,l}\right)w_k,
\end{equation}
where $B_{N,l}(x)$ is a polynomial of degree $N-3$ and $A_{N,k,l}$
independent on $x$. Moreover, let $P^{I}_{N,l}(x)$ satisfies
\begin{equation*}
\int_{\mathbb{R}_+}P^{I}_{N,l}(x)x^j\D x=0,\quad j=0,\ldots,N-1.
\end{equation*}
Then the polynomials $B_{N,l}(x)$ and $\delta_{kl}x+A_{N,k,l}$ are
multi-orthogonal polynomials of type I with indices $(N,2,1)$ for
$l=1$ and $(N,1,2)$ for $l=2$. We will now show that $P^{II}_{N,1}$
and $P^{II}_{N,2}$ exist and are unique if both
$\left<L_{N-1},L_{N-2}\right>_1$ and
$\left<L_{N-3},L_{N-4}\right>_1$ are non-zero.
\begin{lemma}\label{le:exist}
Suppose both $\left<L_{N-1},L_{N-2}\right>_1$ and
$\left<L_{N-3},L_{N-4}\right>_1$ are non-zero, then the polynomials
$P^{II}_{N,1}$ and $P^{II}_{N,2}$ exist and are unique.
\end{lemma}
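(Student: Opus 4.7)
My plan is to show that the homogeneous version of \eqref{eq:mop1} admits only the zero polynomial; since \eqref{eq:mop1} is an $N\times N$ linear system in the $N$ coefficients of a degree $N-1$ polynomial, this simultaneously gives existence and uniqueness of $P^{II}_{N,1}$ and $P^{II}_{N,2}$.

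First I would reduce to a two-dimensional problem. Expanding a candidate $P$ of degree $\le N-1$ in the monic Laguerre basis as $P=\sum_{k=0}^{N-1}c_k L_k$ and testing against $x^j w_0$ for $j=0,\ldots,N-3$, the $w_0$-orthogonality of the $L_k$ forces $c_0=\cdots=c_{N-3}=0$, so $P=a L_{N-1}+b L_{N-2}$. Next, substituting the definition of $w_l$ into $\int_{\mathbb{R}_+}P(x)\,w_l(x)\,\D x$ and swapping the order of integration identifies this integral with $\langle P,L_{N-l-2}\rangle_1$, so the remaining two conditions become $\langle P,L_{N-3}\rangle_1=\langle P,L_{N-4}\rangle_1=0$.

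The heart of the proof then chains two applications of Lemma~\ref{le:linear}. Since $\langle L_{N-3},L_{N-4}\rangle_1\neq 0$, the map $\varrho_{N-3}$ is invertible, so $R_j:=\varrho_{N-3}(L_{N-3-j})$, $j=0,1$, form a basis of $\mathrm{span}(1,y)$. By Definition~\ref{de:fn}, $L_{N-3-j}-R_j=\frac{d}{dx}(q_j(x)\,x(t-\tilde\tau x)\,w(x))/w(x)$ with $\deg q_j\le N-5-j$, and the identity \eqref{eq:parts} yields $\langle f,L_{N-3-j}\rangle_1=\langle f,R_j\rangle_1+\langle f,q_j\rangle_2$. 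For $f\in\{L_{N-1},L_{N-2}\}$ the last term vanishes by Laguerre orthogonality (since $\deg q_j\le N-5<N-2$), so the conditions on $P$ reduce to $\langle P,R_0\rangle_1=\langle P,R_1\rangle_1=0$, equivalently $\langle P,1\rangle_1=\langle P,y\rangle_1=0$. Now the second hypothesis $\langle L_{N-1},L_{N-2}\rangle_1\neq 0$ makes $\varrho_{N-1}$ invertible; writing $\tilde R_j:=\varrho_{N-1}(L_{N-1-j})$, identity \eqref{eq:prod} with $i=N-1$ produces the $2\times 2$ matrix $\bigl[\langle L_{N-1-l},\tilde R_j\rangle_1\bigr]_{l,j=0,1}=\bigl[\langle L_{N-1-l},L_{N-1-j}\rangle_1\bigr]$, which is antisymmetric with off-diagonal entries $\pm\langle L_{N-1},L_{N-2}\rangle_1\neq 0$. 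Since $\tilde R_0,\tilde R_1$ also span $\mathrm{span}(1,y)$, the invertibility of this matrix forces $a=b=0$.

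The main technical point is the degree count for $q_j$ at the start of the third step: one must verify $\deg q_j\le N-5$ so that $\langle L_{N-k},q_j\rangle_2=0$ for $k=1,2$ and the identification of the skew pairings against $L_{N-3}$, $L_{N-4}$ with those against $R_0$, $R_1$ is exact. Once this is secured, the lemma reduces to two applications of Lemma~\ref{le:linear} together with a rank computation on an antisymmetric $2\times 2$ matrix whose nonzero off-diagonal entry is precisely the quantity appearing in the second hypothesis.
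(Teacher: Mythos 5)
Your argument is correct and takes essentially the same route as the paper: both reduce $P^{II}_{N,l}$ to the span of $L_{N-1},L_{N-2}$, identify $\int P^{II}_{N,l}w_j\,\D x$ with the skew product $\langle P^{II}_{N,l},L_{N-j-2}\rangle_1$, and invoke Lemma~\ref{le:linear} twice (at $k=N-3$ and $k=N-1$) together with the degree count on the $q_j$'s and Laguerre orthogonality, so that solvability ultimately rests on $\langle L_{N-1},L_{N-2}\rangle_1\neq 0$. The paper packages the two invertibility facts into the composite $\varrho_{N-1}\varrho_{N-3}^{-1}$ to rewrite $L_{N-3},L_{N-4}$ directly in terms of $L_{N-1},L_{N-2}$ modulo a total derivative, whereas you pass through $\mathrm{span}(1,y)$ and phrase the conclusion as triviality of the homogeneous kernel, but this is only a reorganization of the same computation.
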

\begin{proof}
Let us write $L_{N-l}$ as
\begin{equation*}
L_{N-l}=\frac{d}{dx}\left(q_l(x)x(t-\tilde{\tau}x)w\right)w^{-1}+R_l(x),\quad
l=1,\ldots, 4.
\end{equation*}
for some polynomials $q_l$ of degree $N-l-2$ and $R_l$ of degree
$1$, then by Lemma \ref{le:linear}, we see that both $\varrho_{N-1}$
and $\varrho_{N-3}$ are invertible and hence the composition
$\varrho_{N-1}\varrho_{N-3}^{-1}$ is also invertible. In particular,
there exists a $2\times 2$ invertible matrix with entries $c_{l,k}$
and polynomials $\tilde{q}_l$ of degree $N-3$ such that
\begin{equation*}
\begin{split}
L_{N-l}=\frac{d}{dx}\left(\tilde{q}_l(x)x(t-\tilde{\tau}x)w\right)w^{-1}+c_{l-2,1}L_{N-1}+c_{l-2,2}L_{N-2},\quad
l=3,4.
\end{split}
\end{equation*}
Then by the first condition in (\ref{eq:mop1}), we see that
\begin{equation}\label{eq:ex}
\int_{\mathbb{R}_+}P^{II}_{N,l}w_j(x)dx=\left<P^{II}_{N,l},c_{l,1}L_{N-1}+c_{l,2}L_{N-2}\right>_1=-2\pi
i\delta_{lj},\quad l=1,2,\quad j=1,2.
\end{equation}
As the matrix with entries $c_{ij}$ is invertible, we see that the
linear equations (\ref{eq:ex}) has a unique solution in the linear
span of $L_{N-1}$ and $L_{N-2}$ if and only if
$\left<L_{N-1},L_{N-2}\right>_1\neq 0$.
\end{proof}
As we shall see, existence and uniqueness of $P^{II}_{N,l}$ would
imply that the multi-orthogonal polynomials of type I also exist and
are unique. As in \cite{Pierce}, the multi-orthogonal polynomial
together with the skew orthogonal polynomials form the solution of a
Riemann-Hilbert problem. Let $Y(x)$ be the matrix
\begin{equation}\label{eq:Ymatr}
Y(x)=\begin{pmatrix}\pi_{N,1}(x)&C\left(\pi_{N,1}w_0\right)&C\left(\pi_{N,1}w_1\right)
&C\left(\pi_{N,1}w_{2}\right)\\
\kappa\pi_{N-2,1}(x)&\cdots&&\cdots&\\
P^{II}_{N,1}(x)&\ddots&&\cdots&\\
P^{II}_{N,2}(x)&\cdots&&\cdots
\end{pmatrix},
\end{equation}
where $\kappa$ is the constant
\begin{equation}\label{eq:kappa}
\kappa=-\frac{2\pi i}{
\left<\pi_{N-2,1},x^{N-3}\right>_2}=-\frac{4\pi
i}{\tilde{\tau}Mh_{N-1,1}},\quad
h_{2j-1,1}=\left<\pi_{2j-2,1},\pi_{2j-1,1}\right>_1.
\end{equation}
and $C(f)$ is the Cauchy transform
\begin{equation}\label{eq:cauchy}
C(f)(x)=\frac{1}{2\pi i}\int_{\mathbb{R}_+}\frac{f(s)}{s-x}ds.
\end{equation}
Then by using the orthogonality conditions of the skew orthogonal
polynomials and multi-orthogonal polynomials, together with the jump
discontinuity of the Cauchy transform, one can check that $Y(x)$
satisfies the following Riemann-Hilbert problem.
\begin{equation}\label{eq:RHPY}
\begin{split}
1.\quad &\text{$Y(z)$ is analytic in
$\mathbb{C}\setminus\mathbb{R}_+$},\\
2.\quad &Y_+(z)=Y_-(z)\begin{pmatrix}1&w_0&w_1(z)&w_{2}(z)\\
0&1&0&0\\
0&0&1&0\\
0&0&0&1
\end{pmatrix},\quad z\in\mathbb{R}_+,\\
3.\quad &Y(z)=\left(I+O(z^{-1})\right)\begin{pmatrix}z^{N}\\
&z^{-N+2}\\
&&z^{-1}\\
&&&z^{-1}
\end{pmatrix},\quad z\rightarrow\infty.
\end{split}
\end{equation}
The multi-orthogonal polynomials of type I can also be arranged to
satisfy a Riemann-Hilbert problem. Let $\epsilon$ be the operator
\begin{equation}\label{eq:epsilon}
\epsilon(f)(x)=\frac{1}{2}\int_{0}^{\infty}\epsilon(x-y)f(y)\D y.
\end{equation}
First note that the functions $\psi_{j}(x)=\epsilon(\pi_{j,1}w)$ for
$j\geq 2$, can be express in the form of (\ref{eq:typeI}). By Lemma
\ref{le:linear}, we can write $\pi_{j,1}(x)$ as
\begin{equation*}
\pi_{j,1}(x)=\frac{d}{dx}\left(B_j(x)x(t-\tilde{\tau}x)w\right)w^{-1}+A_{j,1}L_{N-3}+A_{j,2}L_{N-4},
\end{equation*}
Then we have
\begin{equation}\label{eq:pitypeI}
\epsilon(\pi_{j,1}w)w=B_j(x)w_0+A_{j,1}w_1+A_{j,2}w_2.
\end{equation}
where $B_j(x)$ is a polynomial of degree $j-2$.

Let $X(z)$ be the matrix value function defined by
\begin{equation}\label{eq:Xmat}
X(z)=\begin{pmatrix}-\frac{\kappa\tilde{\tau}M}{2}
C\left(\psi_{N-2}w\right)&\frac{\kappa\tilde{\tau}M}{2}B_{N-2}&\frac{\kappa\tilde{\tau}M}{2}
A_{N-2,1}&\frac{\kappa\tilde{\tau}M}{2}A_{N-2,2}\\
-\frac{\tilde{\tau}M}{2}C\left(\psi_{N}w\right)&\frac{\tilde{\tau}M}{2}B_{N}&\frac{\tilde{\tau}M}{2}A_{N,1}&\frac{\tilde{\tau}M}{2}
A_{N,2}\\
-C\left(P^{I}_{N,1}\right)&\cdots&\cdots&\\
-C\left(P^{I}_{N,2}\right)&\cdots&\cdots
\end{pmatrix}.
\end{equation}
Then by using the orthogonality and the the jump discontinuity of
the Cauchy transform, it is easy to check that $X^{-T}(z)$ and
$Y(z)$ satisfies the same Riemann-Hilbert problem and hence the
multi-orthogonal polynomials of type I also exist and are unique.

We will now show that the kernel $S_1(x,y)$ given by (\ref{eq:ker})
can be expressed in terms of the matrix $Y(z)$.
\begin{proposition}\label{pro:CD}
Suppose
$\left<L_{N-3},L_{N-4}\right>_1\left<L_{N-1},L_{N-2}\right>_1\neq 0
$ and let the kernel $S_1(x,y)$ be
\begin{equation}\label{eq:kerS}
S_1(x,y)=-\sum_{j,k=0}^{N-1}r_j(x)w(x)\mu_{jk}\epsilon(r_kw)(y),
\end{equation}
where $r_j(x)$ is an arbitrary degree $j$ monic polynomial for
$j<N-2$ and $r_j(x)=\pi_{j,1}(x)$ for $j\geq N-2$. The matrix $\mu$
with entries $\mu_{jk}$ is the inverse of the matrix $\mathcal{M}$
whose entries are given by
\begin{equation}\label{eq:M}
\left(\mathcal{M}\right)_{jk}=\left<r_j,r_k\right>_1,\quad
j,k=0,\ldots,N-1.
\end{equation}
Then the kernel $S_1(x,y)$ exists and is equal to
\begin{equation}\label{eq:kerRHP}
S_{1}(x,y)=\frac{w(x)w^{-1}(y)}{2\pi i(x-y)}\begin{pmatrix}0&w_0(y)&
w_1(y)&w_{2}(y)\end{pmatrix}Y_+^{-1}(y)Y_+(x)\begin{pmatrix}1\\0\\0\\0\end{pmatrix}.
\end{equation}
\end{proposition}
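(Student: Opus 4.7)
The plan is to use change-of-basis invariance to reduce $S_1(x,y)$ to an explicit finite sum in the skew-orthogonal basis, then recognise that sum as the reproducing kernel of the multi-orthogonal polynomial ensemble governed by the RHP (\ref{eq:RHPY}), at which point the Christoffel--Darboux formula of \cite{KDaem} delivers (\ref{eq:kerRHP}). For the first step, a lower-triangular unipotent change of monic basis $\tilde r=Ar$ transforms $\mathcal M\to A\mathcal M A^{T}$ and $\mu\to A^{-T}\mu A^{-1}$, leaving the bilinear expression (\ref{eq:kerS}) invariant. This shows that $S_1$ depends only on the span of the $r_j$'s, so existence follows from Lemma \ref{le:exist} combined with Proposition \ref{pro:p2N}, and we may replace every $r_j$ by $\pi_{j,1}$. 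In this basis, $\mathcal M$ is block-antidiagonal with $2\times 2$ blocks $\bigl(\begin{smallmatrix}0&h_{2j+1,1}\\-h_{2j+1,1}&0\end{smallmatrix}\bigr)$ whose inverse is immediate, so that
\begin{equation*}
S_1(x,y)=w(x)\sum_{j=0}^{N/2-1}\frac{\pi_{2j,1}(x)\psi_{2j+1}(y)-\pi_{2j+1,1}(x)\psi_{2j}(y)}{h_{2j+1,1}},\qquad \psi_k:=\epsilon(\pi_{k,1}w).
\end{equation*}

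The identification step rests on the observation, via (\ref{eq:pitypeI}), that each $\psi_k$ lies in the span of $w_0/w,\,w_1/w,\,w_2/w$, matching the type-I structural form (\ref{eq:typeI}) that defines $X(z)=Y^{-T}(z)$. Following \cite{Pierce}, the pair $\pi_{N-2,1},\pi_{N,1}$ and the type-II polynomials $P^{II}_{N,1},P^{II}_{N,2}$ constitute the first column of $Y(z)$, while the corresponding type-I data form the first column of $X$. The sum above is therefore precisely the correlation kernel of a multi-orthogonal ensemble to which the Daems--Kuijlaars Christoffel--Darboux theorem of \cite{KDaem} applies, giving $\tfrac{1}{2\pi i(x-y)}(0,w_0(y),w_1(y),w_2(y))\,Y_+^{-1}(y)\,Y_+(x)\,(1,0,0,0)^{T}$; the additional $w(x)w^{-1}(y)$ prefactor converts from the natural biorthogonal kernel to the skew-product kernel $S_1$, producing (\ref{eq:kerRHP}).

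The main obstacle is matching normalisations. The sum carries explicit coefficients $1/h_{2j+1,1}$, whereas the Daems--Kuijlaars telescoping produces coefficients determined by the large-$z$ expansion of $Y$ and by the prefactor $\kappa$ from (\ref{eq:kappa}). The definition $\kappa=-2\pi i/\langle\pi_{N-2,1},x^{N-3}\rangle_2=-4\pi i/(\tilde\tau M h_{N-1,1})$ is precisely what is needed to ensure that the rank-two correction produced by the telescoping reproduces the $j=N/2-1$ term of the sum correctly; once this is in place, the remaining terms follow from the biorthogonality encoded in the RHP and from the skew-to-ordinary product identity (\ref{eq:parts}) that translates $\langle\cdot,\cdot\rangle_1$ on the $\pi_{j,1}$'s into the orthogonality conditions (\ref{eq:mop1}) satisfied by the $P^{II}_{N,l}$.
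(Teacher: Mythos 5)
Two gaps prevent this from being a proof, and both sit at the heart of the argument rather than at the periphery.

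First, passing to the pure skew-orthogonal basis $r_j=\pi_{j,1}$ for all $j$ is not available under the stated hypothesis. Your block-diagonal formula with coefficients $1/h_{2j+1,1}$ requires every $h_{2j+1,1}\neq 0$ for $j=0,\ldots,N/2-1$, i.e.\ the existence and nondegeneracy of the entire skew-orthogonal sequence. But the assumption $\langle L_{N-3},L_{N-4}\rangle_1\langle L_{N-1},L_{N-2}\rangle_1\neq 0$ only yields, via Proposition \ref{pro:p2N}, the polynomials $\pi_{N-2,1},\ldots,\pi_{N+1,1}$; nothing rules out degeneracy of $h_{2j+1,1}$ for $j<N/2-1$, so the $N/2$-term sum is simply not well-defined in general. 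The paper's choice of $r_j$ arbitrary for $j<N-2$ and $r_j=\pi_{j,1}$ only for $j=N-2,N-1$ is precisely what sidesteps this: it only needs $\mathcal{M}_2$ invertible (which follows from invertibility of $\tilde{\mathcal M}$) and $h_{N-1,1}\neq 0$, giving $\mu=\mathrm{diag}\bigl(\mathcal M_2^{-1},-h_{N-1,1}^{-1}\mathcal{J}\bigr)$ as in (\ref{eq:mu}).

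Second, and more substantively, your ``identification step'' is asserted rather than derived, and it is the entire content of the proposition. The Daems--Kuijlaars theorem produces a Christoffel--Darboux formula for a kernel already expressed through type-II and type-I multi-orthogonal polynomials; the skew-orthogonal kernel $S_1$ is not a priori of that form. Establishing the passage requires expanding $xr_j(x)$ and $x\epsilon(r_jw)(x)$ in the respective finite systems plus correction terms (the paper's (\ref{eq:cd})), verifying that the bulk contribution to $(y-x)S_1(x,y)$ cancels via $\mathcal{C}^T\mu=\mu\mathcal{D}$ (using $\mu^T=-\mu$), and explicitly computing the coefficients $d_{j,N}$, $d_{j,N+1}$, $d_{j,N+2}$ so that the surviving four terms can be matched with entries of $Y$ and $X=Y^{-T}$. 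You flag the normalisation match as ``the main obstacle'' and then defer it to the RHP biorthogonality -- but resolving that match by the coefficient computation is the bulk of the proof, not a bookkeeping detail that follows from the general theory once the structure is recognised.
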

\begin{proof} First note that, since $\left<L_{N-1},L_{N-2}\right>_1\neq 0$ and
$\left<L_{N-3},L_{N-4}\right>_1\neq 0$, the skew orthogonal
polynomials $\pi_{N-l,1}$ exist for $l=-1,\ldots,2$. In particular,
the moment matrix $\tilde{\mathcal{M}}$ with entries
\begin{equation*}
\left(\tilde{\mathcal{M}}\right)_{jk}=\left<x^j,y^k\right>_1, \quad
j,k=0,\ldots,N-1
\end{equation*}
is invertible. Since the polynomials $\pi_{N-j,1}$ exist for
$j=-1,\ldots,2$, the sequence $r_k(x)$ and $x^k$ are related by an
invertible transformation. Therefore the matrix $\mathcal{M}$ in
(\ref{eq:M}) is also invertible. As the matrix $\mathcal{M}$ is of
the form
\begin{equation*}
\mathcal{M}=\begin{pmatrix} \mathcal{M}_2&0\\
0&h_{N-1,1}\mathcal{J}
\end{pmatrix},\quad \mathcal{J}=\begin{pmatrix}0&1\\-1&0\end{pmatrix}
\end{equation*}
where $\mathcal{M}_2$ has entries $\left<r_j,r_k\right>_1$ for $j,k$
from $0$ to $N-3$. From this, we see that the matrix $\mu_{jk}$ is
of the form
\begin{equation}\label{eq:mu}
\mu=\begin{pmatrix}
\mathcal{M}_2^{-1}&0&\\
0&-h_{N-1,1}^{-1}\mathcal{J}
\end{pmatrix}.
\end{equation}
As in \cite{KDaem}, let us now expand the functions $xr_j(x)$ and
$x\epsilon(r_jw)$.
\begin{equation}\label{eq:cd}
\begin{split}
xr_j(x)&=\sum_{k=0}^{N-1}c_{jk}r_k(x)+\delta_{N-1,j}\pi_{N,1}(x),\\
x\epsilon(r_jw)(x)&=\sum_{k=0}^{N-1}d_{jk}\epsilon(r_kw)+d_{j,N}\psi_{N}
+d_{j,N+1}\frac{P^I_{N,1}(x)}{w(x)}+d_{j,N+2}\frac{P^I_{N,2}(x)}{w(x)}.
\end{split}
\end{equation}
Then the coefficients $c_{jk}$ and $d_{jk}$ for $j,k=0,\ldots,N-1$
are given by
\begin{equation*}
\begin{split}
c_{jk}=\sum_{l=0}^{N-1}\left<xr_j,r_l\right>_1\mu_{lk},\quad
d_{jk}=\sum_{l=0}^{N-1}\mu_{kl}\left<xr_l,r_k\right>_1.
\end{split}
\end{equation*}
Therefore if we let $\mathcal{C}$ be the matrix with entries
$c_{jk}$, $j,k=0,\ldots N-1$ and $\mathcal{D}$ be the matrix with
entries $d_{jk}$ for $j,k=0,\ldots N-1$, then we have
\begin{equation}\label{eq:expand}
\mathcal{C}=\mathcal{M}_1\mu,\quad
\mathcal{D}=\left(\mu\mathcal{M}_1\right)^T,
\end{equation}
where $\mathcal{M}_1$ is the matrix with entries
$\left<xr_j,r_k\right>_1$ for $j,k=0,\ldots,N-1$. From
(\ref{eq:cd}), we obtain
\begin{equation}\label{eq:S1}
\begin{split}
&(y-x)S_1(x,y)=\sum_{j=0}^{N-1}\pi_{N,1}(x)w(x)\mu_{N-1,j}\epsilon\left(r_{j}w\right)(y)\\
&-\sum_{j,k=0}^{N-1}r_j(x)w(x)\mu_{jk}\left(d_{k,N}\psi_{N}(y)+d_{k,N+1}\frac{P^I_{N,1}(y)}{w(y)}
+d_{k,N+2}\frac{P^I_{N,2}(y)}{w(y)}\right)\\
&+r^T(x)w(x)\left(\mathcal{C}^T\mu-\mu\mathcal{D}\right)\epsilon(rw)(y),
\end{split}
\end{equation}
where $r(x)$ is the column vector with components $r_k(x)$. Now by
(\ref{eq:expand}), we see that
\begin{equation*}
\mathcal{C}^T\mu=\mu^T\mathcal{M}_1^T\mu,\quad
\mu\mathcal{D}=\mu\mathcal{M}_1^T\mu^T,
\end{equation*}
which are equal as $\mu^T=-\mu$. Now from the form of $\mu$ in
(\ref{eq:mu}), we see that
$\mu_{N-1,j}=\delta_{j,N-2}h_{N-1,1}^{-1}$. Hence we have
\begin{equation}\label{eq:first}
\sum_{j=0}^{N-1}\pi_{N,1}(x)w(x)\mu_{N-1,j}\epsilon\left(r_{j}w\right)(y)=h_{N-1,1}^{-1}\pi_{N,1}(x)w(x)\psi_{N-2}(y).
\end{equation}
Let us now consider the second term in (\ref{eq:S1}). As in
(\ref{eq:pitypeI}) we can write $\epsilon(r_kw)w$ as
\begin{equation*}
\epsilon(r_kw)w=q_k(x)+D_{k,1}w_1+D_{k,2}w_2,
\end{equation*}
Then from the form of $P_{N,j}^{I}$ in (\ref{eq:typeI}) and the
orthogonality condition (\ref{eq:mop1}), we see that the
coefficients $d_{k,N+l}$, $l=1,2$ are given by
\begin{equation*}
d_{k,N+l}=D_{k,l}=-\frac{1}{2\pi
i}\int_{\mathbb{R}_+}P_{N,l}^{II}(x)\epsilon(r_kw)w\D x.
\end{equation*}
For $k\neq N-1$, the polynomial $q_k$ is of degree less than or
equal to $N-4$, while $B_{N}$ is a polynomial of degree $N-2$,
therefore the coefficient $d_{k,N}$ is zero unless $k=N-1$. For
$k=N-1$, it is given by the leading coefficient of $q_{N-1}$ divided
by the leading coefficient of $B_{N}$. Since
\begin{equation*}
\pi_{N-1,1}(x)=\frac{d}{dx}\left(q_{N-1}(x)x(t-\tilde{\tau}x)w\right)w^{-1}+D_{k,1}L_{N-3}+D_{k,2}L_{N-4},
\end{equation*}
we see that both the leading coefficient of $q_{N-1}(x)$ and
 $B_{N}$ is $\frac{2}{M\tilde{\tau}}$. Hence $d_{k,N}$ is $\delta_{k,N-1}$. This gives us
\begin{equation*}
\sum_{k,j=0}^{N-1}r_j(x)w(x)\mu_{jk}d_{k,N}\psi_{N}(y)=-h_{N-1,1}^{-1}\pi_{N-2,1}(x)w(x)\psi_{N}(y).
\end{equation*}
To express the second term in (\ref{eq:S1}) in terms of the
multi-orthogonal polynomials, let us now express $P^{II}_{N,l}$ in
terms of the polynomials $r_k$. Let us write
$P^{II}_{N,l}=\sum_{j=0}^{N-1}a_jr_j(x)$. Then we have
\begin{equation*}
\begin{split}
\int_{\mathbb{R}_+}P^{II}_{N,l}(x)\epsilon(r_kw)w\D
x=\sum_{j=0}^{N-1}a_j\left(\mathcal{M}_1\right)_{jk},\quad
\sum_{k=0}^{N-1}\int_{\mathbb{R}_+}P^{II}_{N,l}(x)\epsilon(r_kw)w\D
x\mu_{kj}=a_j.
\end{split}
\end{equation*}
Hence $P^{II}_{N,l}(x)$ can be written as
\begin{equation*}
\begin{split}
P^{II}_{N,l}(x)=\sum_{k,j=0}^{N-1}\left(\int_{\mathbb{R}_+}P^{II}_{N,l}(x)\epsilon(r_kw)w\D
x\mu_{kj}\right)r_j(x)=-2\pi
i\sum_{k,j=0}^{N-1}d_{k,N+l}\mu_{kj}r_j(x)
\end{split}
\end{equation*}
Therefore the second term in (\ref{eq:S1}) is given by
\begin{equation*}
\begin{split}
&\sum_{j,k=0}^{N-1}r_j(x)w(x)\mu_{jk}\left(d_{k,N}\psi_{N}(y)+d_{k,N+1}\frac{P^I_{N,1}(y)}{w(y)}
+d_{k,N+2}\frac{P^I_{N,2}(y)}{w(y)}\right)\\
&=-h_{N-1,1}^{-1}\pi_{N-2,1}(x)\psi_{N}(y)+\frac{1}{2\pi
i}\sum_{l=1}^2P^{II}_{N,l}(x)w(x)w^{-1}(y)P^I_{N,l}(y).
\end{split}
\end{equation*}
From this and (\ref{eq:first}), we obtain
\begin{equation*}
\begin{split}
(y-x)S_1(x,y)&=h_{N-1,1}^{-1}\pi_{N,1}(x)w(x)\psi_{N-2}(y)+
h_{N-1,1}^{-1}\pi_{N-2,1}(x)w(x)\psi_{N}(y)\\
&-\frac{1}{2\pi
i}\sum_{l=1}^2P^{II}_{N,l}(x)w(x)w^{-1}(y)P^I_{N,l}(y).
\end{split}
\end{equation*}
By using the fact that $Y^{-1}(y)=X^T(y)$ and the expressions of the
matrix $Y$ (\ref{eq:Ymatr}) and $X$ (\ref{eq:Xmat}), together with
(\ref{eq:kappa}), we see that this is the same as (\ref{eq:kerRHP}).
\end{proof}
\subsection{The kernel in terms of Laguerre polynomials}
We will now use a result in \cite{baikext} to further simplify the
expression of the kernel $S_1(x,y)$ so that its asymptotics can be
computed using the asymptotics of Laguerre polynomials. Let us
recall the set up in \cite{baikext}. First let $Y(x)$ be a matrix
satisfying the Riemann-Hilbert problem
\begin{equation*}
\begin{split}
1.\quad &\text{$Y(z)$ is analytic in
$\mathbb{C}\setminus\mathbb{R}_+$},\\
2.\quad &Y_+(z)=Y_-(z)\begin{pmatrix}1&w_0(z)&w_1(z)&\cdots&w_{r}(z)\\
0&1&0&\cdots&0\\
\vdots&&\ddots&\vdots&\\
0&&\cdots&&1
\end{pmatrix},\quad z\in\mathbb{R}_+\\
3.\quad &Y(z)=\left(I+O(z^{-1})\right)\begin{pmatrix}z^{n}\\
&z^{-n+r}\\
&&z^{-1}\\
&&&\ddots\\
&&&&z^{-1}
\end{pmatrix},\quad z\rightarrow\infty.
\end{split}
\end{equation*}
and let $\mathcal{K}_1(x,y)$ be the kernel given by
\begin{equation}\label{eq:k1}
\mathcal{K}_1(x,y)=\frac{w_0(x)w_0^{-1}(y)}{2\pi
i(x-y)}\begin{pmatrix}0&w_0(y)&
\ldots&w_{r}(y)\end{pmatrix}Y_+^{-1}(y)Y_+(x)\begin{pmatrix}1\\0\\\vdots\\0\end{pmatrix}.
\end{equation}
Let $\pi_{j,2}(x)$ be the monic orthogonal polynomials with respect
to the weight $w_0(x)$. Let $\mathcal{K}_0(x,y)$ be the following
kernel.
\begin{equation}\label{eq:k0}
\mathcal{K}_0(x,y)=w_0(x)\frac{\pi_{2,n}(x)\pi_{2,n-1}(y)-\pi_{2,n}(y)\pi_{2,n-1}(x)}{h_{n-1,2}(x-y)},
\end{equation}
where $h_{j,2}=\int_{0}^{\infty}\pi_{j,2}^2w_0\D x$. Let $\pi(z)$,
$v(z)$ and $u(z)$ be the following vectors
\begin{equation}\label{eq:tv}
\pi(z)=\left(\pi_{n-r,2},\ldots,\pi_{n-1,2}\right)^T,\quad
v(z)=\left(w_1,\ldots,w_r\right)^Tw_0^{-1}, \quad
u(z)=\left(I-\mathcal{K}_0^T\right)v(z)
\end{equation}
and let $B$ be the matrix
$B=\int_{\mathbb{R}_+}\pi(z)v^T(z)w_0(z)dz$. Then the kernel
$\mathcal{K}_1(x,y)$ can be express as \cite{baikext}
\begin{proposition}\label{pro:baik}
Suppose $\int_{\mathbb{R}_+}p(x)w_i(x)\D x$ converges for any
polynomial $p(x)$. Then the kernel $\mathcal{K}_1(x,y)$ defined by
(\ref{eq:k1}) is given by
\begin{equation}\label{eq:kerform}
\mathcal{K}_1(x,y)-\mathcal{K}_0(x,y)=w_0(x)u^T(y)B^{-1}\pi(x).
\end{equation}
\end{proposition}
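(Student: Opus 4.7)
The plan is to decompose every entry of the first column of $Y(x)$ as an explicit linear combination of the classical monic orthogonal polynomials $\pi_{j,2}(x)$ for the weight $w_0$, and then to reduce the bilinear form (\ref{eq:k1}) to a standard Christoffel--Darboux telescoping sum plus a finite-rank correction.

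First I would exploit the jump and asymptotic conditions of the Riemann--Hilbert problem for $Y(z)$ to pin down the polynomial structure of the first column. The first column has no jump on $\mathbb{R}_+$, and each entry grows at most polynomially at infinity, so every $Y_{k,1}$ is a polynomial. Reading off the required decay of the remaining columns (which are Cauchy transforms of the first column against the weights $w_i$) imposes mixed orthogonality conditions. In particular $Y_{1,1}(z)=p_n(z)$ is monic of degree $n$ with $\int p_n\, x^j w_0\,\D x=0$ for $0\le j\le n-r-1$ and $\int p_n\, w_\ell\,\D x=0$ for $\ell=1,\ldots,r$. The first batch of conditions forces
\begin{equation*}
p_n = \pi_{n,2} + \sum_{k=1}^{r} c_k\,\pi_{n-k,2},
\end{equation*}
and the second batch fixes $(c_1,\ldots,c_r)$ as the solution of a linear system whose matrix is, up to transpose, precisely $B$. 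Analogous expansions hold for $Y_{k,1}$ with $k\ge 2$ and, dually, for the polynomial data encoded in the rows of $Y^{-1}(y)$.

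Next I would substitute these expansions into (\ref{eq:k1}) and expand the product $Y_+^{-1}(y)Y_+(x)$ as a double sum in $\pi_{j,2}(x)\pi_{k,2}(y)$. The sum splits naturally into two parts: a \emph{complete} telescope which, by the three-term recurrence for the $\pi_{j,2}$ and the prefactor $(x-y)^{-1}$, collapses to the Christoffel--Darboux kernel $\mathcal{K}_0(x,y)$; and a \emph{residual} double sum supported on the indices $j,k\in\{n-r,\ldots,n-1\}$, whose coefficients involve $B^{-1}$ (inherited from the previous step) and Cauchy transforms $C(\pi_{j,2}w_\ell)(y)$. Simplifying these Cauchy transforms via orthogonality of $\pi_{j,2}$ against $w_0$ produces precisely the combination $v(y)-\mathcal{K}_0^{T}v(y)=u(y)$, and the residual reorganises into the claimed rank-$r$ bilinear form $w_0(x)\,u^T(y)B^{-1}\pi(x)$.

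The main obstacle will be recognising the operator $I-\mathcal{K}_0^{T}$ inside the residual sum. Contributions from $C(\pi_{j,2}w_0)$ with $j<n-r$ cancel by orthogonality against lower-degree polynomials, while those with $n-r\le j<n$ survive and re-assemble into $\mathcal{K}_0^{T}v$; the sign tracking here is delicate because of the skew structure inherited from the definition of $\mathcal{K}_1$ and because the coefficient of each $\pi_{j,2}(x)$ receives contributions from several rows of $Y^{-1}(y)$. A naive expansion generates many cross terms, and the technical art is to regroup them using Cramer's rule for the $B$-system so that the final perturbation appears in the compact symmetric form $u^T B^{-1}\pi$ rather than as an unstructured bilinear expression.
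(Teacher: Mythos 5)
The paper does not actually prove this proposition: it is imported verbatim from \cite{baikext}, and the only thing the paper adds is the remark directly below stating that Baik's proof carries over once $\int_{\mathbb{R}_+}p(x)w_i(x)\,\D x$ converges. So you are supplying a proof where the paper only supplies a citation, and I can only compare your outline against the strategy of the cited source.

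Your skeleton is the right one and is close in spirit to \cite{baikext}: read off from the RHP asymptotics that $Y_{1,1}$ is a monic degree-$n$ polynomial orthogonal to $x^j$ for $j\le n-r-1$ and to each $w_\ell$, so that it lies in $\pi_{n,2}+\mathrm{span}\{\pi_{n-r,2},\ldots,\pi_{n-1,2}\}$ with coefficients determined by a system governed by $B$, and then isolate a rank-$r$ correction. However, there are real gaps. First, the side $\bigl(0\;\; w_0(y)\;\cdots\;w_r(y)\bigr)Y_+^{-1}(y)$ is not "polynomial data"; the rows of $Y^{-1}$ encode the type I multi-orthogonal functions, which are genuine mixtures $B(x)w_0+\sum A_iw_i$, so the dual expansion you invoke cannot be carried out in the $\pi_{j,2}$ basis the same way, and precisely this step is where $u=(I-\mathcal{K}_0^T)v$ has to emerge. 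You name this difficulty but do not resolve it. Second, the claim that a "complete telescope" collapses to $\mathcal{K}_0$ by the three-term recurrence is not an available argument: the entries $Y_{k,1}$ for $k\ge 2$ are Cauchy transforms, not lower-degree $\pi_{j,2}$, and the starting point (\ref{eq:k1}) is already the Daems--Kuijlaars Christoffel--Darboux identity, not a sum to be telescoped; what actually does the work is orthogonality of $\pi_{j,2}$, $j\le n-r-1$, against all the $w_\ell$ to annihilate the cross terms. Third, a small but symptomatic slip: the matrix of the system fixing $(c_1,\ldots,c_r)$ is $[\int\pi_{n-k,2}w_\ell\,\D x]_{k,\ell}$, which equals $B$ up to a reversal of row order, not a transpose; this suggests the index and sign bookkeeping — which you rightly flag as delicate — is not yet pinned down. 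As written, the proposal is a plausible plan rather than a proof, and its two central steps (producing $u(y)$ from the $Y^{-1}$ side and recovering $B^{-1}$ in the compact bilinear form) remain to be done.
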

\begin{remark} Although in \cite{baikext}, the theorem is stated
with the jump of $Y(x)$ on $\mathbb{R}$ instead of $\mathbb{R}_+$,
while the weights are of the special form $w_0=e^{-NV(x)}$,
$w_j(x)=e^{a_jx}$, where $V(x)$ is an even degree polynomial, the
proof in \cite{baikext} in fact remain valid as long as integrals of
the form $\int_{\mathbb{R}_+}p(x)w_i(x)\D x$ converges for any
polynomial $p(x)$. This is true in our case.
\end{remark}
We can now apply Proposition \ref{pro:baik} to our case. In our
case, the vectors $\pi(z)$ and $v(z)$ are given by
\begin{equation*}
\pi(z)=\left(L_{N-2}\quad L_{N-1}\right)^T,\quad v(z)=\left(w_1\quad
w_2\right)^Tw_0^{-1},
\end{equation*}
while the matrix $B$ is given by
\begin{equation*}
B=\begin{pmatrix}\left<L_{N-2},L_{N-3}\right>_1&\left<L_{N-2},L_{N-4}\right>_1\\
\left<L_{N-1},L_{N-3}\right>_1&\left<L_{N-1},L_{N-4}\right>_1\end{pmatrix}
\end{equation*}
By corollary \ref{cor:linear}, we see that
$\left<L_{N-2},L_{N-3}\right>_1=0$ and hence the determinant of $B$
is
\begin{equation*}
\det B=\left<L_{N-2},L_{N-4}\right>_1\left<L_{N-1},L_{N-3}\right>_1.
\end{equation*}
From Lemma \ref{le:exist}, we see that $B$ is invertible if and only
if the multi-orthogonal polynomials $P^{II}_{N,l}$ exist. Let us now
consider the vector $u(z)$. It is given by
\begin{equation*}
u(z)=\left(I-\mathcal{K}_0^T\right)v(z)=v(z)-\sum_{j=0}^{N-1}\frac{L_j(z)}{h_{j,0}}\left(\left<L_j,L_{N-3}\right>_1\quad\left<L_j,L_{N-4}\right>_1\right)
\end{equation*}
by the Christoffel-Darboux formula, where
$h_{j,0}=\left<L_j,L_j\right>_2$. We shall show that $L_{N-3}$ and
$L_{N-4}$ can be written in the following form
\begin{equation}\label{eq:mappi}
L_{N-l}=\frac{d}{dx}\left(q_l(x)x(t-\tilde{\tau}x)w\right)w^{-1}+C_{l-2,1}\pi_{N+1,1}+C_{l-2,2}\pi_{N,1},\quad
l=3,4,
\end{equation}
for some polynomial $q_l(x)$ of degree $N-1$. By Lemma
\ref{le:linear}, we see that if $\left<L_{N-3},L_{N-4}\right>_1\neq
0$, then the map $\varrho_{N-3}$ in Definition \ref{de:fn} is
invertible. Therefore if the restriction of the map $f$ in
Definition \ref{de:fn} is also invertible on the span of
$\pi_{N+1,1}$ and $\pi_{N,1}$, we will be able to write $L_{N-3}$
and $L_{N-4}$ in the form of (\ref{eq:mappi}).
\begin{lemma}\label{le:pi2nmap}
Let $f$ be the map in Definition \ref{de:fn} and let $\varrho_{\pi}$
be its restriction to the span of $\pi_{N+1,1}$ and $\pi_{N,1}$.
Then $\varrho_{\pi}$ is invertible.
\end{lemma}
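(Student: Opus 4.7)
The plan is to mirror the argument used in Lemma \ref{le:linear}: show that $\ker\varrho_{\pi}=0$ by testing a putative kernel element against sufficiently many polynomials under the skew product, and then exploit the skew orthogonality built into $\pi_{N,1}$ and $\pi_{N+1,1}$ to conclude. Suppose a nontrivial combination $a_1\pi_{N+1,1}+a_2\pi_{N,1}$ lies in $\ker\varrho_{\pi}$. By the definition of $f$ in Definition \ref{de:fn}, this means
\[
a_1\pi_{N+1,1}(x)+a_2\pi_{N,1}(x)=\frac{d}{dx}\left(q(x)\,x(t-\tilde{\tau}x)w(x)\right)w^{-1}(x)
\]
for some polynomial $q(x)$ of degree at most $N-1$ (the degree count is forced by the fact that the left-hand side has degree at most $N+1$).

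Writing $q(x)=\sum_{k=0}^{N-1}c_k x^k$ and recognizing that the right-hand side equals $\sum_{k} c_k H_k(x)$ with $H_k$ as in the definition preceding equation (\ref{eq:parts}), the identity (\ref{eq:parts}) immediately yields
\[
\left\langle f,\; a_1\pi_{N+1,1}+a_2\pi_{N,1}\right\rangle_1 = \left\langle f,q\right\rangle_2 \qquad\text{for every polynomial }f.
\]
I would then test this with $f=L_j$ for $j=0,1,\ldots,N-1$. Since $L_j$ has degree $j\leq N-1$ and $N$ is even, the defining skew orthogonality (\ref{eq:sop}) of $\pi_{N,1}$ and $\pi_{N+1,1}$, together with the antisymmetry of $\langle\cdot,\cdot\rangle_1$, forces the left-hand side to vanish. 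Hence $\langle L_j,q\rangle_2 = 0$ for every $j=0,\ldots,N-1$.

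Expanding $q$ in the Laguerre basis $L_0,\ldots,L_{N-1}$ and using the orthogonality $\langle L_j,L_k\rangle_2=\delta_{jk}h_{j,0}$ from (\ref{eq:laguerre}), every coefficient of $q$ must vanish, so $q=0$. It follows that $a_1\pi_{N+1,1}+a_2\pi_{N,1}=0$, and since $\pi_{N+1,1}$ and $\pi_{N,1}$ have different degrees this forces $a_1=a_2=0$, so $\varrho_{\pi}$ has trivial kernel and is therefore invertible.

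The only point that requires a small amount of care — and which I would double-check — is the range of indices in the skew orthogonality: one needs $\pi_{N,1}$ and $\pi_{N+1,1}$ to both be skew-orthogonal to every monomial up to $y^{N-1}$, which is exactly what (\ref{eq:sop}) delivers under the assumption that $N$ is even. Apart from this bookkeeping the argument has no real obstruction; it is in fact cleaner than the proof of Lemma \ref{le:linear}, because here the test polynomials $\pi_{N,1}$ and $\pi_{N+1,1}$ are designed to be skew-orthogonal to all lower-degree polynomials, whereas $L_k$ and $L_{k-1}$ in the earlier lemma enjoy orthogonality only with respect to $\langle\cdot,\cdot\rangle_2$.
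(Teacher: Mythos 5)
Your argument is correct and is essentially the paper's own proof: assume a kernel element, move the derivative term across the skew product via (\ref{eq:parts}), use the skew orthogonality (\ref{eq:sop}) of $\pi_{N,1}$ and $\pi_{N+1,1}$ against all polynomials of degree $\leq N-1$ to kill the left-hand side, and deduce $q=0$. The only cosmetic difference is that you test against the Laguerre basis $L_j$ rather than the monomials $x^j$ used in the paper; both span the same space, so the two are the same argument.
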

\begin{proof} Suppose there exist $a_1$ and $a_2$ such that
\begin{equation*}
a_1\pi_{N+1,1}+a_2\pi_{N,1}=\frac{d}{dx}\left(q(x)x(t-\tilde{\tau}x)w\right)w^{-1}
\end{equation*}
for some polynomial $q(x)$ of degree at most $N-1$. Then we have
\begin{equation*}
\left<x^j,a_1\pi_{N+1,1}+a_2\pi_{N,1}\right>_1=\left<x^jq(x)\right>_2=0,\quad
j=0,\ldots,N-1.
\end{equation*}
As the degree of $q(x)$ is at most $N-1$, this is only possible if
$q(x)=0$.
\end{proof}
The composition of $\varrho_{N-3}$ and $\varrho_{\pi}^{-1}$ will
therefore give us a representation of $L_{N-3}$ and $L_{N-4}$ in the
form of (\ref{eq:mappi}). By using this representation and the fact
that $q_l(x)$ is of degree at most $N-1$, we see that
\begin{equation*}
\begin{split}
\mathcal{K}_0^T\left(w_lw_0^{-1}\right)&=\sum_{j=0}^{N-1}\frac{L_j(x)}{h_{j,0}}\left<L_j,L_{N-l-2}\right>_1\\
&=\sum_{j=0}^{N-1}\frac{L_j(x)}{h_{j,0}}\left(\left<L_j,q_l\right>_2+\left<L_j,C_{l-2,1}\pi_{N+1,1}+C_{l-2,2}\pi_{N,1}\right>_1\right),\\
&=q_l(x).
\end{split}
\end{equation*}
Therefore the vector $u(x)$ is given by
\begin{equation*}
u(x)=w(x)w_0^{-1}(x)C\epsilon\left(\pi_{N+1,1}w\quad
\pi_{N,1}w\right)^T
\end{equation*}
where $C$ is the matrix with entries $C_{i,j}$. We will now
determine the constants $C_{i,j}$.
\begin{lemma}\label{le:A}
Let $\pi_{N,1}$ and $\pi_{N+1,1}$ be the monic skew orthogonal
polynomial with respect to the weight $w(x)$ and choose
$\pi_{N+1,1}$ so that the constant $c$ in (\ref{eq:p2N}) is zero.
Then the vector $u(y)$ in (\ref{eq:kerform}) is given by
\begin{equation}\label{eq:ux}
u(x)=w(x)w_0^{-1}(x)C\epsilon\left(\pi_{N+1,1}w\quad
\pi_{N,1}w\right)^T,
\end{equation}
where $C$ is the matrix whose entries $C_{i,j}$ are given by
\begin{equation}\label{eq:Aent}
\begin{split}
C_{i,1}&=-\frac{M\tilde{\tau}\left<L_{N-1},L_{N-i-2}\right>_1}{2h_{N-1,0}},\\
C_{i,2}&=\left(Mt-\tilde{\tau}\left(N+M\right)\right)\frac{\left<L_{N-1},L_{N-i-2}\right>_1}{2h_{N-1,0}}-M\tilde{\tau}
\frac{\left<L_{N-2},L_{N-i-2}\right>_1}{2h_{N-2,0}}.
\end{split}
\end{equation}
\end{lemma}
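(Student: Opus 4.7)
The formula (\ref{eq:ux}) itself is already derived in the paragraph immediately preceding the lemma, so the only new work is computing the entries $C_{i,j}$ explicitly. My plan is to combine a linear extension of (\ref{eq:parts}) with a degree-by-degree comparison of (\ref{eq:mappi}).

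First, by linearity the identity (\ref{eq:parts}) yields $\left<f,D(q)\right>_1=\left<f,q\right>_2$ for every polynomial $q$, where $D(q)(x):=\frac{d}{dx}(q(x)x(t-\tilde{\tau}x)w(x))w^{-1}(x)$ (so that $D(x^j)=H_j$ in the notation of (\ref{eq:parts})). Applying this skew product with $L_{N-j}$ to both sides of (\ref{eq:mappi}) and using that $\pi_{N+1,1}$ and $\pi_{N,1}$ are skew-orthogonal to every polynomial of degree $\leq N-1$, one obtains $\left<L_{N-j},L_{N-l}\right>_1=\left<L_{N-j},q_l\right>_2$ for $j=1,2$ and $l=3,4$. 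Writing $q_l(x)=\alpha_l x^{N-1}+\beta_l x^{N-2}+\cdots$ and using the sub-leading coefficient of $L_{N-1}$ from (\ref{eq:laguerre}), this system pins down $\alpha_l=\left<L_{N-1},L_{N-l}\right>_1/h_{N-1,0}$ and $\beta_l=\left<L_{N-2},L_{N-l}\right>_1/h_{N-2,0}-\frac{(M-1)(N-1)}{M}\alpha_l$.

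Next I would recover $C_{l-2,1}$ and $C_{l-2,2}$ by matching the coefficients of $x^{N+1}$ and $x^N$ in (\ref{eq:mappi}). A direct computation using $\frac{w'(x)}{w(x)}=-\frac{M}{2}+\frac{M-N-1}{2x}+\frac{\tilde{\tau}}{2(t-\tilde{\tau}x)}$ gives $D(x^k)=\frac{M\tilde{\tau}}{2}x^{k+2}+c_k\,x^{k+1}+\cdots$ with $c_{N-1}=-\frac{Mt+\tilde{\tau}(M+N)}{2}$. Since $L_{N-l}$ has degree at most $N-3$, the $x^{N+1}$-coefficient on the right-hand side of (\ref{eq:mappi}) must vanish, which immediately yields $C_{l-2,1}=-\frac{M\tilde{\tau}}{2}\alpha_l$ and matches the asserted formula. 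For the $x^N$-coefficient one also needs the $x^N$-coefficient of $\pi_{N+1,1}$: with the choice $c=0$ in (\ref{eq:p2N}), $\pi_{N+1,1}=L_{N+1}+(\text{linear combination of }L_{N-1}\text{ and }L_{N-2})$, so its $x^N$-coefficient equals that of $L_{N+1}$, namely $-(M+1)(N+1)/M$ by (\ref{eq:laguerre}). Plugging everything into the $x^N$-equation and invoking the algebraic identity $(MN+1)-(M-1)(N-1)=M+N$, one obtains exactly the stated formula for $C_{l-2,2}$.

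The main obstacle is the delicate bookkeeping in the $x^N$-coefficient match: one must track both sub-leading coefficients of the Laguerre polynomials and the precise polynomial expansion of $D(x^k)$, and then verify the non-trivial cancellation that produces $M+N$. All of this is purely algebraic once the identity $\left<f,D(q)\right>_1=\left<f,q\right>_2$ and the expansion of $\frac{w'}{w}$ are in hand, so no further analytic input is needed.
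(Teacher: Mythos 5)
Your proposal is correct and takes essentially the same route as the paper: it pins down $q_{l,N-1},q_{l,N-2}$ from $\left<L_{N-j},L_{N-l}\right>_1=\left<L_{N-j},q_l\right>_2$ via (\ref{eq:nnp1}) and then reads off $C_{l-2,1},C_{l-2,2}$ from the top two coefficients in (\ref{eq:mappi}). The only cosmetic difference is that you extract the $C_{l-2,2}$ equation by matching the $x^N$-coefficients directly (tracking the $x^N$-coefficient $-\tfrac{(M+1)(N+1)}{M}$ of $\pi_{N+1,1}$ from $L_{N+1}$), whereas the paper pairs (\ref{eq:mappi}) with $L_N$ under $\left<\,\cdot\,,\cdot\right>_2$ using $\left<\pi_{N+1,1},L_N\right>_2=0$ when $c=0$; after substituting the $x^{N+1}$-match these produce the identical linear relation, and your quoted identity is a valid regrouping of the paper's $(M-1)(N-1)-(M+1)(N+1)=-2(M+N)$.
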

\begin{proof} First let us compute the leading order coefficients of
the polynomial $q_l(x)$ in (\ref{eq:mappi}). Let
$q_l(x)=q_{l,N-1}x^{N-1}+q_{l,N-2}x^{N-2}+O(x^{N-3})$, then we have
\begin{equation*}
\begin{split}
&\frac{d}{dx}\left(q_l(x)x(t-\tilde{\tau}x)w\right)w^{-1}=\frac{M\tilde{\tau}}{2}q_{l,N-1}x^{N+1},\\
&+\left(-\frac{\tilde{\tau}}{2}(N+M)q_{l,N-1}-\frac{Mt}{2}q_{l,N-1}+\frac{M\tilde{\tau}}{2}q_{l,N-2}\right)x^{N}+O(x^{N-1})
\end{split}
\end{equation*}
From (\ref{eq:mappi}), we see that
\begin{equation}\label{eq:qlead}
q_{l,N-1}=-\frac{2}{M\tilde{\tau}}C_{l-2,1}.
\end{equation}
On the other hand, by orthogonality, we have
\begin{equation*}
\left<L_{N-1},L_{N-l}\right>_1=\left<L_{N-1},q_l\right>_2=q_{l,N-1}h_{N-1,0}.
\end{equation*}
Therefore $C_{l-2,1}$ is given by
\begin{equation*}
C_{l-2,1}=-\frac{M\tilde{\tau}\left<L_{N-1},L_{N-l}\right>_1}{2h_{N-1,0}}
\end{equation*}
Let us now compute $C_{i,2}$. By taking the skew product, we have
\begin{equation*}
\left<L_{N-2},L_{N-l}\right>_1=q_{l,N-1}\left<L_{N-2},x^{N-1}\right>_2
+q_{l,N-2}h_{N-2,0}
\end{equation*}
Now from (\ref{eq:laguerre}), we obtain
\begin{equation}\label{eq:nnp1}
\begin{split}
\left<x^{j+1},L_{j}\right>_2&=\left<L_{j+1}+\frac{(M-N+j+1)(j+1)}{M}x^j,L_j\right>_2,\\
&=\frac{(M-N+j+1)(j+1)}{M}h_{j,0}.
\end{split}
\end{equation}
Hence $q_{l,N-2}$ is equal to
\begin{equation}\label{eq:q2}
q_{l,N-2}=\frac{\left<L_{N-2},L_{N-l}\right>_1}{h_{N-2,0}}-q_{l,N-1}\frac{(M-1)(N-1)}{M}
\end{equation}
By using the expansion (\ref{eq:p2N}) of the skew orthogonal
polynomials in terms of $L_{N-k}$, we have
\begin{equation*}
\begin{split}
\left<L_{N-l},L_{N}\right>_2&=\frac{M\tilde{\tau}}{2}q_{l,N-1}\left<x^{N+1},L_{N}\right>_2+C_{l-2,2}h_{N,0}\\
&+\left(\frac{\tilde{\tau}}{2}(-N-M)q_{l,N-1}-\frac{Mt}{2}q_{l,N-1}+\frac{M\tilde{\tau}}{2}q_{l,N-2}\right)h_{N,0}
\end{split}
\end{equation*}
By substituting (\ref{eq:q2}) into this, we obtain
\begin{equation*}
C_{l-2,2}=\left(-\frac{\tilde{\tau}}{2}\left(M+N\right)+\frac{Mt}{2}\right)\frac{\left<L_{N-1},L_{N-l}\right>_1}{h_{N-1,0}}-\frac{M\tilde{\tau}}{2}
\frac{\left<L_{N-2},L_{N-l}\right>_1}{h_{N-2,0}}
\end{equation*}
This proves the lemma.
\end{proof}
From this and (\ref{eq:kerform}), we obtain the following.
\begin{corollary}\label{cor:baik}
The kernel $S_1(x,y)$ defined by (\ref{eq:kerS}) is given by
\begin{equation}\label{eq:kerform1}
\begin{split}
&S_1(x,y)-K_2(x,y)=\\
&\epsilon\left(\pi_{N+1,1}w\quad\pi_{N,1}w\right)(y)\begin{pmatrix}0&-\frac{M\tilde{\tau}}{2h_{N-1}}\\
-\frac{M\tilde{\tau}}{2h_{N-2}}&\frac{Mt-\tilde{\tau}(N+M)}{2h_{N-1}}\end{pmatrix}\pi(x)w(x)
\end{split}
\end{equation}
where $K_2(x,y)$ is the kernel of the Laguerre polynomials
\begin{equation}\label{eq:k2}
K_2(x,y)=\left(\frac{y(t-\tilde{\tau}y)}{x(t-\tilde{\tau}x)}\right)^{\frac{1}{2}}
w_0^{\frac{1}{2}}(x)w_0^{\frac{1}{2}}(y)\frac{L_{N}(x)L_{N-1}(y)-L_N(y)L_{N-1}(x)}{h_{N-1,0}(x-y)}
\end{equation}
\end{corollary}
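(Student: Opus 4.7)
The plan is to combine Proposition \ref{pro:baik} with Lemma \ref{le:A} and the explicit formulas for $B$ and $C$. The matrix $Y$ defined in (\ref{eq:Ymatr}) satisfies the Riemann-Hilbert problem (\ref{eq:RHPY}), which is precisely the one assumed in Proposition \ref{pro:baik} with $n=N$ and $r=2$. Hence Proposition \ref{pro:baik} applies and gives
\begin{equation*}
\mathcal{K}_1(x,y)-\mathcal{K}_0(x,y)=w_0(x)u^T(y)B^{-1}\pi(x),
\end{equation*}
with $\pi(x)=(L_{N-2},L_{N-1})^T$ and $B$ as written above. The first step is to reconcile the prefactors: the kernel in Proposition \ref{pro:CD} has the prefactor $w(x)w^{-1}(y)/(2\pi i(x-y))$, whereas $\mathcal{K}_1$ is defined with $w_0(x)w_0^{-1}(y)/(2\pi i(x-y))$. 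Using $w(x)^2=w_0(x)(x(t-\tilde{\tau}x))^{-1}$, a direct check shows that $S_1(x,y)$ and $\mathcal{K}_1(x,y)$ differ only by multiplication by $(w_0^{1/2}(y)/w_0^{1/2}(x))(y(t-\tilde{\tau}y)/x(t-\tilde{\tau}x))^{1/2}$, and the same scalar relates $K_2(x,y)$ to $\mathcal{K}_0(x,y)$. Multiplying Proposition \ref{pro:baik} through by this factor leaves the RHS in the form $\text{(scalar)}\cdot u^T(y)B^{-1}\pi(x)$; substituting $u$ from (\ref{eq:ux}), the prefactor $w(y)w_0^{-1}(y)$ in $u$ combines with the scalar to produce exactly $w(x)$, so one obtains
\begin{equation*}
S_1(x,y)-K_2(x,y)=\epsilon\bigl(\pi_{N+1,1}w\ \ \pi_{N,1}w\bigr)(y)\,C^TB^{-1}\,\pi(x)w(x).
\end{equation*}

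The second step is to verify that $C^TB^{-1}$ equals the explicit $2\times 2$ matrix in the corollary. Writing $\alpha_{ij}=\langle L_{N-i},L_{N-j}\rangle_1$, Corollary \ref{cor:linear} gives $\alpha_{2,3}=0$ (since $N-2$ is even), which kills several terms in both $C$ and $B$. Specifically, $C_{1,2}$ reduces to $\tfrac{Mt-\tilde{\tau}(N+M)}{2h_{N-1,0}}\alpha_{1,3}$, and the matrix $B$ becomes antidiagonal-plus-one-entry with $\det B=-\alpha_{1,3}\alpha_{2,4}$, whose inverse is straightforward to write out. Carrying out the $2\times 2$ multiplication $C^TB^{-1}$, the $\alpha_{1,3}, \alpha_{1,4}, \alpha_{2,4}$ factors cancel pairwise, leaving precisely
\begin{equation*}
C^TB^{-1}=\begin{pmatrix}0&-\frac{M\tilde{\tau}}{2h_{N-1,0}}\\[4pt]-\frac{M\tilde{\tau}}{2h_{N-2,0}}&\frac{Mt-\tilde{\tau}(M+N)}{2h_{N-1,0}}\end{pmatrix},
\end{equation*}
which is the matrix in the statement.

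The heart of the argument is simply the identification $(Y,\mathcal{K}_1,\mathcal{K}_0)\leftrightarrow(Y,S_1,K_2)$ followed by substitution. None of the steps are especially subtle, but the one that needs most care is the bookkeeping of the square-root prefactors: one must check that $w(x)$ and $(x(t-\tilde{\tau}x))^{\pm 1/2}$ combine correctly to give $w(x)$ on the right and the $(y(t-\tilde{\tau}y)/x(t-\tilde{\tau}x))^{1/2}$ factor hidden inside $K_2$ on the left. Once that is in place, the rest is algebra that closes thanks to the vanishing $\alpha_{2,3}=0$ supplied by Corollary \ref{cor:linear}.
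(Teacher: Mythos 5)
Your proposal is correct and is exactly the argument the paper intends: the paper states the corollary with only the remark ``From this and (\ref{eq:kerform}), we obtain the following,'' and you have filled in precisely those details. The prefactor bookkeeping (using $w^2=w_0\cdot(x(t-\tilde{\tau}x))^{-1}$ to see that $S_1/\mathcal{K}_1=K_2/\mathcal{K}_0$ and that the residual scalar collapses against $w(y)w_0^{-1}(y)$ in $u$ to leave $w(x)$) and the cancellation in $C^TB^{-1}$ driven by $\langle L_{N-2},L_{N-3}\rangle_1=0$ both check out.
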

\section{Derivative of the partition function}
In this section we will derive a formula for the derivative of
determinant of the matrix $\mathcal{M}$ given in (\ref{eq:M}). We
have the following.
\begin{proposition}\label{pro:derpar}
Let $\mathcal{M}$ be the matrix given by (\ref{eq:M}), where the
sequence of monic polynomials $r_j(x)$ in (\ref{eq:M}) is chosen
such that $r_j(x)$ are arbitrary degree $j$ monic polynomials that
are independent on $t$ and $r_j(x)=\pi_{j,1}(x)$ for $j=N-2,N-1$.
Then the logarithmic derivative of $\det\mathcal{M}$ with respect to
$t$ is given by
\begin{equation}\label{eq:derpar}
\frac{\p}{\p
t}\log\det\mathcal{M}=\int_{\mathbb{R}_+}\frac{S_1(x,x)}{t-\tilde{\tau}x}\D
x,
\end{equation}
where $S_1(x,y)$ is the kernel given in (\ref{eq:kerS}).
\end{proposition}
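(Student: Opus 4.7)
The plan is to apply Jacobi's identity to $\det\mathcal{M}$, observe that the $t$-dependence of the two polynomials $\pi_{N-2,1}$ and $\pi_{N-1,1}$ contributes nothing because $\det\mathcal{M}$ is invariant under unipotent triangular changes of basis, differentiate only the weight $w(x)$, and finally symmetrise the resulting double integral against the antisymmetries of $\epsilon(x-y)$ and $\mu_{jk}$ to reconstruct $S_1(x,x)$.

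More precisely, by Jacobi's formula
$$\frac{\p}{\p t}\log\det\mathcal{M}=\sum_{j,k=0}^{N-1}\mu_{jk}\,\frac{\p \mathcal{M}_{kj}}{\p t},\qquad \mu=\mathcal{M}^{-1}.$$
The entry $\mathcal{M}_{kj}=\langle r_k,r_j\rangle_1$ depends on $t$ both through $w(x)$ and through the monic polynomials $r_{N-2}=\pi_{N-2,1}$, $r_{N-1}=\pi_{N-1,1}$. Expanding $\p_t r_j=\sum_{l<j}a_{jl}(t)r_l$ (the diagonal coefficient vanishes by monicity), the Leibniz contribution of these polynomial derivatives collapses via $\sum_j \mu_{ij}\mathcal{M}_{jk}=\delta_{ik}$ into the trace $\sum_j a_{jj}=0$, so only the weight-derivative piece of $\p_t \mathcal{M}_{kj}$ survives. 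This is just the standard invariance of the Pfaffian under unipotent triangular basis changes applied to the $t$-dependent passage between the fixed basis $\{x^j\}$ and the chosen $\{r_j\}$.

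Differentiating $w(x)=e^{-Mx/2}x^{(M-N-1)/2}(t-\tilde\tau x)^{-1/2}$ gives $\p_t w(x) = -\tfrac12(t-\tilde\tau x)^{-1}w(x)$, hence
$$\p_t\mathcal{M}_{kj}=-\tfrac12\int_0^\infty\!\!\int_0^\infty \epsilon(x-y)r_k(x)r_j(y)\left[\frac{1}{t-\tilde\tau x}+\frac{1}{t-\tilde\tau y}\right]w(x)w(y)\,\D x\,\D y.$$
Swap $x\leftrightarrow y$ in the term containing $(t-\tilde\tau y)^{-1}$ and use $\epsilon(y-x)=-\epsilon(x-y)$ together with $\mu_{kj}=-\mu_{jk}$; the two halves then give equal (not cancelling) contributions. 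One further relabelling $j\leftrightarrow k$ identifies the $y$-integration $\int\epsilon(x-y)r_k(y)w(y)\,\D y$ with $\epsilon(r_kw)(x)$ up to the normalisation fixed in (\ref{eq:epsilon}), and the inner sum $\sum_{j,k}\mu_{jk}r_j(x)w(x)\epsilon(r_kw)(x)$ is recognised, via (\ref{eq:kerS}) and the antisymmetry of $\mu$, as $-S_1(x,x)$. Combining the factors produces (\ref{eq:derpar}).

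The only genuinely delicate step is the first one: because $\pi_{N-2,1}$ and $\pi_{N-1,1}$ really do depend on $t$, one must verify that their derivatives contribute nothing, which uses the monicity of $\pi_{j,1}$ to kill the diagonal coefficients $a_{jj}$. Everything else is a symmetrisation exercise in the antisymmetry of $\epsilon(x-y)$ and $\mu$, and in matching the normalisation constants of (\ref{eq:skewinner}) and (\ref{eq:epsilon}); in particular, the Riemann--Hilbert representation of Proposition \ref{pro:CD} and the explicit expansion of Proposition \ref{pro:p2N} are not needed.
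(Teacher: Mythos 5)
Your proof is correct and, on the single delicate point (why the $t$-dependence of $\pi_{N-2,1}$ and $\pi_{N-1,1}$ contributes nothing), you take a genuinely different and in fact cleaner route than the paper. The paper argues term by term using the skew-orthogonality conditions: it invokes the block-diagonal structure of $\mathcal{M}$ (hence of $\mu$) to reduce the polynomial-derivative contribution to the single pair $(N-2,N-1)$, and then kills $\left<\p_t\pi_{N-2,1},\pi_{N-1,1}\right>_1$ and $\left<\pi_{N-2,1},\p_t\pi_{N-1,1}\right>_1$ by degree counting against the orthogonality relations (\ref{eq:sop}). Your argument sidesteps all of this: writing $\p_t r_j=\sum_{l<j}a_{jl}r_l$ (strictly lower triangular by monicity) and contracting against $\mu_{jk}$ produces Kronecker deltas $\delta_{lk}$, $\delta_{jl}$ with $l<k$ (respectively $l<j$), which vanish identically — equivalently, $\det\mathcal{M}$ is unchanged by a unipotent triangular change of basis, so one may as well compute it in the $t$-independent basis $\{x^j\}$. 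This is a more elementary and more conceptual argument that does not appeal to orthogonality or to Proposition \ref{pro:p2N} at all, and it would work verbatim for any $t$-dependent monic family. The symmetrisation of the weight-derivative term against the antisymmetries of $\epsilon$ and $\mu$ and the reassembly into $S_1(x,x)$ coincide with what the paper does (the paper likewise leaves the matching of the $\tfrac12$ in $\epsilon(x)=\tfrac12\mathrm{sgn}(x)$ against the $\tfrac12$ in (\ref{eq:epsilon}) implicit, so your hedged phrasing on the normalisation constants is consistent with the source). One small imprecision in your write-up: the cancellation is not literally the trace $\sum_j a_{jj}=0$; rather the Kronecker deltas are supported where $l=k$ (or $l=j$), which lies outside the index range $l<k$ (or $l<j$), and it is the monicity of $r_j$ that guarantees the expansion contains no diagonal term in the first place. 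Your parenthetical remark "(the diagonal coefficient vanishes by monicity)" shows you understand this; just be aware the two formulations say the same thing.
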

\begin{proof} First let us differential the determinant
$\det\mathcal{M}$ with respect to $t$. We have
\begin{equation*}
\begin{split}
\frac{\p}{\p
t}\det\mathcal{M}&=\det\begin{pmatrix}\p_tM_{00}&M_{01}&\cdots&M_{0,N-1}\\
\vdots&\vdots&\ddots&\vdots\\
\p_tM_{2n-1,0}&M_{2n-1,1}&\cdots&M_{N-1,N-1}\end{pmatrix}\\&+
\det\begin{pmatrix}M_{00}&\p_tM_{01}&\cdots&M_{0,N-1}\\
\vdots&\vdots&\ddots&\vdots\\
M_{2n-1,0}&\p_tM_{2n-1,1}&\cdots&M_{N-1,N-1}\end{pmatrix}\\
&+\cdots+
\det\begin{pmatrix}M_{00}&M_{01}&\cdots&\p_tM_{0,N-1}\\
\vdots&\vdots&\ddots&\vdots\\
M_{N-1,0}&M_{N-1,1}&\cdots&\p_tM_{N-1,N-1}\end{pmatrix}.
\end{split}
\end{equation*}
Computing the individual determinants using the Laplace formula, we
obtain
\begin{equation}\label{eq:logder}
\frac{\p}{\p
t}\det\mathcal{M}=\det\mathcal{M}\sum_{i,j=0}^{N-1}\p_tM_{ij}\mu_{ji}.
\end{equation}
As $r_j(x)$ are independent on $t$ for $j<N-2$, the derivative
$\p_tM_{ij}$ is given by
\begin{equation*}
\p_tM_{ij}=-\frac{1}{2}\left(\left<\frac{r_i}{t-\tilde{\tau}x},r_j\right>_1+
\left<r_i,\frac{r_j}{t-\tilde{\tau}y}\right>_1\right)=
-\frac{1}{2}\left(\left<\frac{r_i}{t-\tilde{\tau}x},r_j\right>_1-
\left<\frac{r_j}{t-\tilde{\tau}x},r_i\right>_1\right)
\end{equation*}
for $i,j<N-2$. For either $i$ or $j$ equal to $N-2$ or $N-1$, we
have
\begin{equation*}
\begin{split}
\p_tM_{i,N-1}&=\delta_{N-2,i}\Bigg(-\frac{1}{2}\left(\left<\frac{\pi_{N-2,1}}{t-\tilde{\tau}{x}},\pi_{N-1,1}\right>_1
-\left<\frac{\pi_{N-1,1}}{t-\tilde{\tau}{x}},\pi_{N-2,1}\right>_1\right)\\
&+\left<\p_t\pi_{N-2,1},\pi_{N-1,1}\right>_1+\left<\pi_{N-2,1},\p_t\pi_{N-1,1}\right>_1
\Bigg).
\end{split}
\end{equation*}
Note that by orthogonality, the last two terms in the above
expression are zero as $\p_t\pi_{N-1,1}$ is of degree $N-2$ and
$\p_t\pi_{N-2,1}$ is of degree $N-3$. Applying the same argument to
$\p_tM_{i,N-2}$, we obtain
\begin{equation*}
\begin{split}
\p_tM_{i,N-1}&=-\delta_{N-2,i}\left(\left<\frac{\pi_{N-2,1}}{t-\tilde{\tau}{x}},\pi_{N-1,1}\right>_1
-\left<\frac{\pi_{N-1,1}}{t-\tilde{\tau}{x}},\pi_{N-2,1}\right>_1\right)\\
\p_tM_{i,N-2}&=-\frac{\delta_{N-1,i}}{2}\left(\left<\frac{\pi_{N-1,1}}{t-\tilde{\tau}{x}},\pi_{N-2,1}\right>_1
-\left<\frac{\pi_{N-2,1}}{t-\tilde{\tau}{x}},\pi_{N-1,1}\right>_1\right).
\end{split}
\end{equation*}
As $\mathcal{M}$ is anti-symmetric, the derivatives $\p_tM_{N-1,i}$
and $\p_tM_{N-2,i}$ are given by $\p_tM_{N-1,i}=-\p_tM_{i,N-1}$ and
$\p_tM_{N-2,i}=-\p_tM_{i,N-2}$. From these and (\ref{eq:logder}), we
obtain
\begin{equation}\label{eq:de}
\begin{split}
\frac{\p}{\p
t}\det\mathcal{M}&=\det\mathcal{M}\Bigg(\sum_{i,j=0}^{N-3}\left<\frac{r_i}{t-\tilde{\tau}x},r_j\right>_1\mu_{ij}
\\&+2\left(\left<\frac{\pi_{N-2,1}}{t-\tilde{\tau}{x}},\pi_{N-1,1}\right>_1
-\left<\frac{\pi_{N-1,1}}{t-\tilde{\tau}{x}},\pi_{N-2,1}\right>_1\right)\mu_{N-2,N-1}\Bigg),
\end{split}
\end{equation}
where we have used the anti-symmetry of $\mathcal{M}$ and $\mu$ to
obtain the last term. From the structure of the matrix $\mu$ in
(\ref{eq:mu}), we see that the last term in (\ref{eq:de}) can be
written as
\begin{equation*}
\begin{split}
&2\left(\left<\frac{\pi_{N-2,1}}{t-\tilde{\tau}{x}},\pi_{N-1,1}\right>_1
-\left<\frac{\pi_{N-1,1}}{t-\tilde{\tau}{x}},\pi_{N-2,1}\right>_1\right)\mu_{N-2,N-1}\Bigg)=\\
&\sum_{i=0}^{N-1}\sum_{j=N-2}^{N-1}\left<\frac{r_i}{t-\tilde{\tau}x},r_j\right>_1\mu_{ij}
+\sum_{i=N-2}^{N-1}\sum_{j=0}^{N-1}\left<\frac{r_i}{t-\tilde{\tau}x},r_j\right>_1\mu_{ij}
\end{split}
\end{equation*}
From this, (\ref{eq:de}) and the expression of the kernel in
(\ref{eq:kerS}), we obtain (\ref{eq:derpar}).
\end{proof}
\section*{Appendix: A proof of the j.p.d.f. formula using Zonal polynomials}
\renewcommand{\theequation}{A.\arabic{equation}}
\setcounter{equation}{0}

We present here a simpler algebraic proof of Theorem \ref{thm:main1}
using Zonal polynomials. Zonal polynomials are introduced by James
\cite{James} and Hua \cite{Hu} independently. They are polynomials
with matrix argument that depend on an index $p$ which is a
partition of an integer $k$. The real Zonal polynomials $Z_p(X)$
take arguments in symmetric matrices and are homogenous polynomials
in the eigenvalues of its matrix argument $X$. We shall not go into
the details of their definitions, but only state the important
properties of these polynomials that is relevant to our proof.
Readers who are interested can refer to the excellent references of
\cite{Muir}, \cite{Mac} and \cite{Ta}.

Let $p$ be a partition of an integer $k$ and let $l(p)$ be the
length of the partition. We will use $p\vdash k$ to indicate that
$p$ is a partition of $k$. Let $X$ and $Y$ be $N\times N$ symmetric
matrices and $x_i$, $y_i$ their eigenvalues. Given a partition
$p=(p_1,\ldots,p_{l(p)})$ of the integer $k$, we will order the
parts $p_i$ such that if $i<j$, then $p_i>p_j$. If we have 2
partitions $p$ and
$p^{\prime}=(p^{\prime}_1,\ldots,p_{l(p^{\prime})}^{\prime})$, then
we say that $p<p^{\prime}$ if there exists an index $j$ such that
$p_i=p_i^{\prime}$ for $i<j$ and $p_j<p_j^{\prime}$. Let the
monomial $x^p$ be $x_1^{p_1}\ldots x_{p_{l(p)}}^{p_{l(p)}}$, then we
say that $x^{p^{\prime}}$ is of a higher weight than $x^p$ if
$p^{\prime}>p$. Then the Zonal polynomial $Z_{p}(X)$ is a homogenous
polynomial of degree $k$ in the eigenvalues $x_j$ with the highest
weight term being $x^p$. It has the following properties.
\begin{equation}\label{eq:zonal}
\begin{split}
&\left(\tr(X)\right)^k=\sum_{p\vdash k}Z_p(X),\\
&\int_{O(N)}e^{-My\tr\left(XgYg^T\right)}\D g
=\sum_{k=0}^{\infty}\frac{(My)^k}{k!}\sum_{p\vdash
k}\frac{Z_p(X)Z_p(Y)}{Z_p(I_N)}
\end{split}
\end{equation}
These properties can be found in the references \cite{Muir},
\cite{Mac} and \cite{Ta}. Another important property is the
following generating function formula for the Zonal polynomials,
which can be found in \cite{Mac} and \cite{Ta}.
\begin{equation}\label{eq:genfun}
\prod_{i,j=1}^N\left(1-2\theta
x_iy_j\right)^{-\frac{1}{2}}=\sum_{k=0}^{\infty}\frac{\theta^k}{k!}\sum_{p\vdash
k}\frac{Z_p(X)Z_p(Y)}{d_p}
\end{equation}
for some constant $d_p$. In particular, if $(k)$ is the partition of
$k$ with length 1, that is, $(k)=(k,0,\ldots,0)$, then the constant
$d_{(k)}$ is given by
\begin{equation*}
d_{(k)}=\frac{1}{(2k-1)!!}.
\end{equation*}
For the rank 1 spiked model, let us consider the case where all but
one $y_j$ is zero and denote the non-zero eigenvalue by $y$. Then
from the fact that the highest weight term in $Z_p(Y)$ is
$y_1^{p_1}\ldots y_{p_{l(p)}}^{p_{l(p)}}$, we see that the only
non-zero $Z_p(Y)$ is $Z_{(k)}(Y)$, which by the first equation in
(\ref{eq:zonal}), is simply $y^k$. Therefore the formulae in
(\ref{eq:zonal}) and (\ref{eq:genfun}) are greatly simplified in
this case.
\begin{equation}\label{eq:gen2}
\begin{split}
&\int_{O(N)}e^{-My\tr\left(XgYg^T\right)}\D g
=\sum_{k=0}^{\infty}(My)^k\frac{Z_{(k)}(X)y^k}{k!Z_{(k)}(I_N)},\\
&\prod_{i=1}^N\left(1-2\theta
x_iy\right)^{-\frac{1}{2}}=\sum_{k=0}^{\infty}\theta^k\frac{(2k-1)!!Z_{(k)}(X)y^k}{k!}
\end{split}
\end{equation}
By using the generating function formula, we see that $Z_{(k)}(I_N)$
is given by
\begin{equation}\label{eq:ZI}
Z_{(k)}(I_N)=\frac{\left(N/2+k-1\right)!2^k}{(N/2-1)!(2k-1)!!}.
\end{equation}
By taking $\theta=\frac{1}{2t}$ in the second equation of
(\ref{eq:gen2}), we see that
\begin{equation*}
\prod_{i=1}^N\left(t-
x_iy\right)^{-\frac{1}{2}}=t^{-\frac{N}{2}}\sum_{k=0}^{\infty}(2t)^{-k}\frac{(2k-1)!!Z_{(k)}(X)y^k}{k!}.
\end{equation*}
We can now compute the integral
\begin{equation*}
S(t)=\int_{\Gamma}e^{Mt}\prod_{i=1}^N\left(t-
x_iy\right)^{-\frac{1}{2}}\D t
\end{equation*}
by taking residue at $\infty$, which is the $t^{-1}$ coefficient in
the following expansion
\begin{equation*}
e^{Mt}\prod_{i=1}^N\left(t-
x_iy\right)^{-\frac{1}{2}}=\sum_{k,j=0}^{\infty}\frac{M^jt^{-\frac{N}{2}+j-k}(2k-1)!!Z_{(k)}(X)y^k}{2^kj!k!}.
\end{equation*}
This coefficient is given by
\begin{equation*}
\begin{split}
S(t)&=M^{\frac{N}{2}-1}\sum_{k=0}^{\infty}\frac{Z_{(k)}(X)(2k-1)!!y^kM^k}{2^k(N/2+k-1)!k!}\\
&=\frac{M^{\frac{N}{2}-1}}{\left(N/2-1\right)!}\sum_{k=0}^{\infty}\frac{Z_{(k)}(X)y^kM^k}{Z_{(k)}(I_N)k!}=\frac{M^{\frac{N}{2}-1}}{\left(N/2-1\right)!}\int_{O(N)}e^{-My\tr\left(XgYg^T\right)}\D
g.
\end{split}
\end{equation*}
This proves Theorem \ref{thm:main1}. There also exist complex and
quarternionic Zonal polynomials $C_{p}(X)$ and $Q_{p}(X)$ which
satisfy the followings instead.
\begin{equation*}
\begin{split}
&\int_{U(N)}e^{-My\tr\left(XgYg^{\dag}\right)}g^{\dag}\D g
=\sum_{k=0}^{\infty}\frac{(My)^k}{k!}\sum_{p\vdash
k}\frac{C_p(X)C_p(Y)}{C_p(I_N)},\\
&\int_{Sp(N)}e^{-My\mathrm{Re}\left(\tr\left(XgYg^{-1}\right)\right)}g^{-1}\D
g =\sum_{k=0}^{\infty}\frac{(My)^k}{k!}\sum_{p\vdash
k}\frac{Q_p(X)Q_p(Y)}{Q_p(I_N)}.
\end{split}
\end{equation*}
Their generating functions are given by
\begin{equation*}
\begin{split}
&\prod_{i,j=1}^N\left(1-2\theta
x_iy_j\right)^{-1}=\sum_{k=0}^{\infty}\frac{\theta^k}{k!}\sum_{p\vdash
k}\frac{C_p(X)C_p(Y)}{c_p},\\
&\prod_{i,j=1}^N\left(1-2\theta
x_iy_j\right)^{-2}=\sum_{k=0}^{\infty}\frac{\theta^k}{k!}\sum_{p\vdash
k}\frac{Q_p(X)Q_p(Y)}{q_p}
\end{split}
\end{equation*}
where $c_{(k)}$ and $q_{(k)}$ are
\begin{equation*}
\begin{split}
c_{(k)}=\frac{1}{2^kk!},\quad q_{(k)}=\frac{1}{(k+1)!2^k}
\end{split}
\end{equation*}
Then by following the same argument as in the real case, we can
write down the following integral formulae for rank one
perturbations of the complex and quarternionic cases.
\begin{equation}\label{eq:compsym}
\begin{split}
&\int_{U(N)}e^{-My\tr\left(XgYg^{\dag}\right)}g^{\dag}\D g
=\frac{(N-1)!}{M^{N-1}}\int_{\Gamma}e^{Mt}\prod_{i=1}^N\left(t-
x_iy\right)^{-1}\D t,\\
&\int_{Sp(N)}e^{-My\mathrm{Re}\left(\tr\left(XgYg^{-1}\right)\right)}g^{-1}\D
g =\frac{(2N-1)!}{M^{2N-1}}\int_{\Gamma}e^{Mt}\prod_{i=1}^N\left(t-
x_iy\right)^{-2}\D t.
\end{split}
\end{equation}

\vspace{.25cm}

\noindent\rule{16.2cm}{.5pt}

\vspace{.25cm}

{\small

\noindent {\sl School of Mathematics \\
                       University of Bristol\\
                       Bristol BS8 1TW, UK  \\
                       Email: {\tt m.mo@bristol.ac.uk}

                       \vspace{.25cm}

                       \noindent  24 November  2010}}

\begin{thebibliography}{10}
\bibitem{AF} M. Adler, P. J. Forrester, T. Nagao, and P. van Moerbeke. Classical skew orthogonal
polynomials and random matrices. {\em J. Statist. Phys.}, (2000),
{\bf 99}, 141--170.

\bibitem{Ap1} A. I. Aptekarev.
Multiple orthogonal polynomials. {\em   J. Comput. Appl. Math.},
{\bf 99}, (1998), no. 1-2, 423--447.

\bibitem{Ap2} A. I. Aptekarev, A. Branquinho and W. Van Assche.
Multiple orthogonal polynomials for classical weights. {\em  Trans.
Amer. Math. Soc. }, {\bf 355}, (2003), no. 10, 3887--3914.

\bibitem{Bai} Z. Bai. Methodologies in spectral analysis of large-dimensional random matrices, a review.
{\em Statist. Sinica}, {\bf 9}, (1999), 611–677.

\bibitem{BS} Z. Bai and J. Silverstein. On the empirical distribution of
eigenvalues of a class of large dimensional random matrices. {\em J.
Multivariate Anal.}, {\bf 54}, (1995), 175–192.

\bibitem{baikext} J. Baik. On the Christoffel-Darboux kernel for
random Hermitian matrices with external source. {\em Nonlinearity},
{\bf 22}, (2009), 1021--1061.

\bibitem{baik04} J. Baik, G. Ben-Arous, S. P\'ech\'e.
Phase transition of the largest eigenvalue for nonnull complex
sample covariance matrices. {\em  Ann. Probab.}, {\bf 33}, (2005),
no. 5, 1643--1697.

\bibitem{BE} M. Berg\'ere and B. Eynard. Some properties of angular
integrals. {\em J. Phys. A: Math. Theor.}, {\bf 42}, (2009), 265201.

\bibitem{BK1} P. Bleher, A. B. J. Kuijlaars.
Integral representations for multiple Hermite and multiple Laguerre
polynomials. {\em   Ann. Inst. Fourier (Grenoble) }, {\bf 55},
(2005), no. 6, 2001--2014.

\bibitem{BB} A. Bloemendal and B. Vir\'ag. Limits of spiked random matrices I.
{\em ArXiv:1011.1877}.

\bibitem{Brujin} N. G. de Bruijn. On some multiple integrals involving determinants. {\em J. Indian
Math. Soc.} (N.S.), {\bf 19}, (1955), 133--151.

\bibitem{DG} P. Deift and D. Gioev. Universality at the edge of the spectrum for unitary, orthogonal, and symplectic ensembles of random matrices.
{\em Comm. Pure Appl. Math.}, {\bf 60}, (2007), no. 6, 867–910.

\bibitem{DG1} P. Deift and D. Gioev. Universality in random matrix theory for orthogonal and symplectic ensembles.
{\em Int. Math. Res. Pap. IMRP}, (2007), no. 2, Art. ID rpm004, 116
pp.

\bibitem{DGKV} P. Deift, D. Gioev, T. Kriecherbauer, M. Vanlessen. Universality for orthogonal and symplectic Laguerre-type ensembles.
{\em J. Stat. Phys.}, {\bf 129}, (2007), no. 5-6, 949–1053.

\bibitem{KDaem} E. Daems and A. Kuijlaars.
A Christoffel-Darboux formula for multiple orthogonal polynomials.
{\em J. Approx. Theory}, {\bf 130}, (2004), 190--202.

\bibitem{vanGerKuij} J. S. Geronimon, A. B. J. Kuijlaars and W. Van
Assche. Riemann-Hilbert problems for multiple orthogonal
polynomials. pp. 23--59 in: {\em Nato ASI special function 2000.
Current perspective and future directions.} (J. Bustoz, M. E. H.
Ismail and S. K. Suslov eds.), Nato Science series II Vol 30, Kluwer
Academic Publishers. (2001)

\bibitem{HC} Harish-Chandra.
Differential operators on a semisimple Lie Algebra. {\em American
Journal of Mathematics.}, {\bf 79}, (1957), no.1, 87--120.

\bibitem{Hu} L. K. Hua. Harmonic analysis of functions of several
complex variables in the classical domains. {\em AMS Translations},
{\bf 6}. (1963).

\bibitem{IZ} C. Itzykson and J. B. Zuber.
The planar approximation. II. {\em J. Maths. Phys.}, {\bf 21},
(1980), no.3, 411--421.

\bibitem{James} A. T. James. Zonal polynomials of the real positive
definite matrices. {\em Ann. Math.}, {\bf 74}, (1961), 456--69

\bibitem{johnstone} I. Johnstone.
On the distribution of the largest principal component. {\em Ann.
Statist.}, {\bf 29}, (2001), 295--327.

\bibitem{Mac} I. G. Macdonald. {\em Symmetric functions and Hall polynomials.}
Oxford Mathematical Monographs. The Clarendon Press Oxford
University Press, New York, second edition, 1995. With contributions
by A. Zelevinsky, Oxford Science Publications.

\bibitem{MP} V.A. Marcenko and L.A. Pastur. Distribution of eigenvalues for some sets of random matrices.
{\em Math. USSR-Sbornik}, (1967), {\bf 1}, 457–-486.

\bibitem{Maj} N. Majumdar, O. Bohigas and A. Lakshminarayan. Exact
mininum eigenvalue distribution of and entangled random pure state.
{\em J. Stat. Phys.}, {\bf 131}, (2008), 33--49.

\bibitem{Muir} R. Muirhead. {\em Aspects of multivariate statistical
theory.} Wiley series in Probability and Mathematical Statistics,
(1982).

\bibitem{Pierce} V.  Pierce. A Riemann-Hilbert problem for skew-orthogonal polynomials.
{\em J. Comput. Appl. Math.}, {\bf 215}, (2008), 230--241.

\bibitem{Ta} A. Takemura. {\em Zonal polynomials.} Inst. of Math.
Statistics Lecture Notes-Monograph Series, Vol. 4. Hayward,
California. (1984).

\bibitem{TW} C. A. Tracy and H. Widom. Correlation functions, cluster functions, and
spacing distributions for random matrices. {\em J. Statist. Phys.},
{\bf 92}, (1998), 809--835.

\bibitem{TW1} C. A. Tracy and H. Widom. On orthogonal and symplectic matrix ensembles.
{\em Comm. Math. Phys.}, {\bf 177}, (1996), 727--754.

\bibitem{TW2} C. A. Tracy and H. Widom. Matrix kernels for the Gaussian orthogonal
and symplectic ensembles. {\em Ann. Inst. Fourier}, {\bf 55},
(2005), 2197--2207.

\bibitem{Wang} D. Wang. The Largest Sample Eigenvalue Distribution in the rank 1 Quaternionic Spiked Model of Wishart Ensemble.
{\em Ann. Probab.}, {\bf 37} (4), (2009), 1273--1328.

\bibitem{W} H. Widom. On the relation between orthogonal, symplectic
and unitary matrix ensembles, {\em J. Stat. Phys.}, {\bf 94},
(1999), 347--363.
\end{thebibliography}
\end{document}